\tikzset{> =stealth}
\theoremstyle{plain}
\newtheorem{theorem}{Theorem}[section]
\newtheorem{lemma}[theorem]{Lemma}
\newtheorem{proposition}[theorem]{Proposition}
\newtheorem{corollary}[theorem]{Corollary}
\theoremstyle{definition}
\newtheorem{definition}[theorem]{Definition}
\theoremstyle{remark}
\newtheorem{remark}[theorem]{Remark}
\newcommand{\op}{{^\mathrm{op}}}
\newcommand{\comp}{{^\mathsf{c}}}
\newcommand{\cl}{c\ell}
\newcommand{\D}{\mathfrak{D}}
\renewcommand{\L}{\mathcal{L}}
\newcommand{\M}{\mathcal{M}}
\newcommand{\Frm}{\mathrm{Frm}}
\newcommand{\BiFrm}{\mathrm{BiFrm}}
\newcommand{\RBiFrm}{\mathrm{RegBiFrm}}
\newcommand{\StrZdBiFrm}{\mathrm{Str0DBiFrm}}
\newcommand{\Top}{\mathrm{Top}}
\newcommand{\Cat}{\mathrm{Cat}}
\renewcommand{\C}{\mathds{C}}
\renewcommand{\P}{\mathfrak{F}}
\newcommand{\Sk}{\mathrm{Sk}}
\begin{document}

\title[Strictly zero-dimensional biframes]{Strictly zero-dimensional biframes and a characterisation of congruence frames}
\author[G. Manuell]{Graham Manuell}
\address{Department of Mathematics and Applied Mathematics, University of Cape Town, Private Bag Rondebosch, Cape Town 7701, South Africa}
\email{graham@manuell.me}
\thanks{I acknowledge financial assistance from the National Research Foundation of South Africa.}

\subjclass[2010]{06D22, 06B10, 54E55, 54F65, 18B30.}
\keywords{strictly zero-dimensional biframe, congruence biframe, congruence frame, assembly, dissolution locale}

\begin{abstract}
 Strictly zero-dimensional biframes were introduced by Bana\-schew\-ski and Br\"{u}mmer as a class of strongly zero-dimensional biframes including
 the congruence biframes. We consider the category of strictly zero-dimensional biframes and show it is both complete and cocomplete.
 We characterise the extremal monomorphisms in this category and explore the special position that congruence biframes hold in it.
 Finally, we provide an internal characterisation of congruence biframes, and hence, of congruence frames.
\end{abstract}

\maketitle

\setcounter{section}{-1}
\section{Introduction}

It is a remarkable aspect of frame theory that the lattice of congruences on a frame is itself a frame. It is natural to wonder precisely what form
these congruence frames take. Isbell asks for such a characterisation in \cite{Isbell} and there have subsequently been a number of results which provide
some insight. Joyal and Tierney \cite{galoisTheoryGrothendieck} characterise the inclusion of a frame into its congruence frame by a universal property.
More recently, Frith and Schauerte \cite{asymmetricCong} provide a simple bitopological characterisation of the congruence functor.
We would, however, particularly like an \emph{internal} characterisation of congruence frames. In the compact case, this first was obtained by
Dow and Watson \cite{SkulaSpaces} and independently by Isbell \cite{IsbellDissolute}, but we do not believe a general characterisation has
appeared until now.

Frith observes in \cite{FrithCong} that the congruence frame is arguably better viewed as a biframe. All the previous attempts at characterisation can be
viewed from this perspective. The characterisation of Joyal and Tierney suggests \cref{prop:congruence_free_str0dbifrm}, while that of
Dow and Watson yields part of \cref{lem:compact_str0d_implies_congruential}.

In this paper, we provide a simple internal characterisation of general congruence biframes. This then implies a somewhat more complicated characterisation
of congruence frames. The result follows naturally from consideration of strictly zero-dimensional biframes.

Since their introduction in \cite{StrictlyZeroDimensional}, strictly zero-dimensional biframes have mainly been used as a technical tool to simplify
proofs involving congruence biframes. In this paper we attempt to develop a theory of strictly zero-dimensional biframes as objects of study in their
own right. It is hoped that the study of strictly zero-dimensional biframes might provide insight into other aspects of pointfree topology in future.

\section{Background}

For background on frames, biframes and congruence frames see \cite{PicadoPultr}, \cite{Biframes} and \cite{FrithCong}.
For background on topological functors see \cite{JoyOfCats}.

\subsection{Frames, biframes and congruences}

A frame is a complete lattice satisfying the frame distributivity condition
$x \wedge \bigvee_{\alpha \in I} x_\alpha = \bigvee_{\alpha \in I} (x \wedge x_\alpha)$ for arbitrary families $(x_\alpha)_{\alpha \in I}$.
We denote the smallest element of a frame by $0$ and the largest element by $1$. A frame homomorphism is a function between frames which preserves
finite meets and arbitrary joins. The category of frames is called $\Frm$.

A frame homomorphism $f$ is \emph{dense} if $f(x) = 0 \implies x = 0$ and \emph{codense} if $f(x) = 1 \implies x = 1$.
Every codense homomorphism from a zero-dimensional frame is injective.

A congruence on a frame $L$ is an equivalence relation on $L$ that is also a subframe of $L \times L$.
A closed congruence $\nabla_a$ is a congruence generated by the pair $(0,a)$ and has the explicit form $\{(x,y) \,\mid\, x \vee a = y \vee a\}$.
The quotient $L/\nabla_a$ is isomorphic to the upset ${\uparrow} a \subseteq L$. Similarly, an open congruence $\Delta_a$ is generated by $(a,1)$
and the quotient $L/\Delta_a$ is isomorphic to the downset ${\downarrow} a \subseteq L$.

The lattice of all congruences $\C L$ on a frame $L$ is itself a frame and the assignment $\nabla_L\colon a \mapsto \nabla_a$ is an injective
frame homomorphism. The congruences $\nabla_a$ and $\Delta_a$ are complements of each other and together the closed congruences and open congruences
generate $\C L$.

The map $\nabla_L\colon L \to \C L$ is initial amongst all maps $g\colon L \to M$ for which every element in the image of $g$ is complemented in $M$.
This universal property can be used to define a functor $\C\colon \Frm \to \Frm$ so that the family of maps $(\nabla_L)_L$ becomes a natural
transformation $\nabla\colon \mathbbm{1}_\Frm \to \C$. So if $f\colon L \to M$ and $a \in L$, then $\C f(\nabla_a) = \nabla_{f(a)}$.

A biframe is a triple of frames $\L = (\L_0, \L_1, \L_2)$ where $\L_1$ and $\L_2$ are subframes of $\L_0$ which together generate $\L_0$.
The frame $\L_0$ is called the total part of $\L$, while $\L_1$ and $\L_2$ are called the first and second parts respectively.
A biframe homomorphism $h$ is a frame homomorphism $h_0$ between the total parts which preserves the first and second parts. The restrictions of
a biframe homomorphism $h$ to the first and seconds parts are denoted by $h_1$ and $h_2$ respectively.
The category of biframes is called $\BiFrm$.

A biframe homomorphism is said to be \emph{dense} if its total part is dense.
A biframe homomorphism is a \emph{surjection} or \emph{quotient map} if its first and second parts are surjective. The total parts of biframe quotients
are also surjective and quotients of a biframe are induced uniquely by quotients of its total part. Furthermore, a biframe homomorphism factors
through a biframe quotient if and only if its total part does.

A biframe $\L$ is \emph{strictly zero-dimensional} if every element $a \in \L_1$ is complemented in $\L_0$ with its complement $a\comp$ lying in $\L_2$, and
moreover, these complements generate $\L_2$. (The original definition in \cite{StrictlyZeroDimensional} allowed the roles of $\L_1$ and $\L_2$
above to be interchanged, but as has become standard, we will fix a chirality.) The category of strictly zero-dimensional biframes and biframe
homomorphisms will be called $\StrZdBiFrm$.

A congruence frame $\C L$ has a natural biframe structure $(\C L, \nabla L, \Delta L)$ where
$\nabla L \cong L$ is the subframe of closed congruences and $\Delta L$ is the subframe generated by the open congruences.
A congruence biframe is readily seen to be strictly zero-dimensional. Maps of the form $\C f$ are part preserving and so we may think of $\C$
as a functor from the category frames to the category of strictly zero-dimensional biframes.

\subsection{Closure and quotients}

We will make use of the `closure' map $\cl\colon \C L \to \C L$ given by $\cl = \nabla_L \circ (\nabla_L)_*$.
This map assigns every congruence to the largest closed congruence below it. It is easily seen to be monotone, deflationary and idempotent
and to preserve finite meets. Furthermore, if $C \le D$ in $\C L$, then $\cl(D) \le C$ if and only if the canonical map
$L/C \twoheadrightarrow L/D$ is dense. In particular, $L \twoheadrightarrow L/D$ is dense if and only if $\cl(D) = 0$,
in which case we say $D$ is a dense congruence.

It is an oft-touted advantage of the pointfree approach to topology that every frame $L$ has a smallest dense sublocale.
We write $\D_L$ for the corresponding largest dense congruence.
More generally, for every $a \in L$ there is a largest congruence with closure $\nabla_a$, which we will denote by $\partial_a$.
These are precisely the congruences which induce Boolean quotients of $L$. We might call them Boolean congruences, but to avoid
terminological confusion later, we will call them \emph{clear congruences} as in \cite{clearKappaFrames}.
Clear congruences will play an essential role in our characterisation of congruence biframes in \cref{section:clear_elements}.

We now consider the congruence functor in more detail. Let $f\colon L \to M$ be a frame homomorphism. We can describe the map
$\C f\colon \C L \to \C M$ explicitly. If $C$ is a congruence on $L$ then $\C f(C)$ is the congruence generated by the image of $C$ under $f \times f$.
The right adjoint of $\C f$, which we write as $\C f_*$, also has a simple description: it sends congruences on $M$ to their preimages under $f\times f$.

Note that if we think of elements of $\C L$ as equivalence classes of extremal epimorphisms instead of congruences, then $\C f$ computes
the pushout along $f$. Indeed, $\C f_*$ and $\C f$ are closely related to the covariant and contravariant extremal subobject
functors on $\Frm\op$. Either this perspective or the explicit description of $\C f_*$ quickly gives the following lemma.

\begin{lemma}
 A frame homomorphism $f\colon L \to M$ induces a map $\widetilde{f}\colon L/C \to M/D$ if and only if $\C f (C) \le D$.
\end{lemma}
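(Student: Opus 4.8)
The plan is to unwind what it means for $f$ to ``induce'' $\widetilde f$. Writing $q_C\colon L \twoheadrightarrow L/C$ and $q_D\colon M \twoheadrightarrow M/D$ for the canonical quotient maps, the statement that $f$ induces $\widetilde f$ means that there is a frame homomorphism $\widetilde f\colon L/C \to M/D$ satisfying $\widetilde f \circ q_C = q_D \circ f$; such a $\widetilde f$ is automatically unique since $q_C$ is an epimorphism. In other words, the claim reduces to determining when the composite $q_D \circ f\colon L \to M/D$ factors through the quotient $q_C$.

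First I would invoke the universal property of the quotient map $q_C$: a frame homomorphism $g\colon L \to N$ factors through $q_C$ precisely when it identifies all pairs belonging to $C$, that is, when $C$ is contained in the kernel congruence $\{(x,y) \mid g(x) = g(y)\}$ of $g$ (this set is itself a congruence, so containment and the order $\le$ in $\C L$ agree here). Applying this to $g = q_D \circ f$, the factorisation $\widetilde f$ exists if and only if $C$ lies below the kernel congruence of $q_D \circ f$.

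It then remains to identify that kernel congruence. A pair $(x,y)$ satisfies $q_D(f(x)) = q_D(f(y))$ exactly when $(f(x), f(y)) \in D$, so the kernel congruence of $q_D \circ f$ is the preimage $(f \times f)^{-1}(D)$, which by the explicit description of the right adjoint recalled above is precisely $\C f_*(D)$. Hence $\widetilde f$ exists if and only if $C \le \C f_*(D)$, and by the adjunction $\C f \dashv \C f_*$ this is equivalent to $\C f(C) \le D$, as required.

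Each step here is short, and I do not expect a real obstacle; the only point needing slight care is the clean identification of the kernel congruence of $q_D \circ f$ with $\C f_*(D)$, after which the whole argument collapses to a single application of the adjunction. If one prefers, the same conclusion is immediate from the perspective of $\C f_*$ as the contravariant extremal subobject functor on $\Frm\op$, under which ``$q_D \circ f$ factors through $q_C$'' reads exactly as ``$C$ is below the pullback of $D$''.
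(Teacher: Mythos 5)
Your argument is correct and is exactly the one the paper intends: the lemma is stated without a formal proof, immediately after the remark that it follows ``quickly'' from the explicit description of $\C f_*$ as the preimage under $f \times f$ (or from the pushout/extremal-subobject perspective, which you also mention). Your identification of the kernel congruence of $q_D \circ f$ with $\C f_*(D)$ followed by the adjunction $\C f \dashv \C f_*$ is precisely that intended argument, spelled out.
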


Another result that will come in useful describes the interaction between the closure map and $\C f$ and can be viewed as a pointfree
analogue of the definition of continuity in terms of preimages and the topological closure operator.
\begin{lemma}\label{lem:closure_and_Cf}
 Let $f\colon L \to M$ and take $C \in \C L$. Then $\C f(\cl(C)) \le \cl(\C f(C))$.
\end{lemma}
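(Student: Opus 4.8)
The plan is to reduce the inequality to a statement about a single closed congruence, exploiting that $\cl$ always produces a closed congruence and that $\C f$ carries closed congruences to closed congruences. The point is that although $\cl$ is defined abstractly, its output is concrete: for any congruence $C$ we have $\cl(C) = \nabla_a$ with $a = (\nabla_L)_*(C)$, directly from the definition $\cl = \nabla_L \circ (\nabla_L)_*$.

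First I would rewrite the left-hand side. Setting $a = (\nabla_L)_*(C)$, so that $\cl(C) = \nabla_a$, the naturality identity $\C f(\nabla_a) = \nabla_{f(a)}$ recorded in the background gives $\C f(\cl(C)) = \nabla_{f(a)}$. Thus the claim becomes the single inequality $\nabla_{f(a)} \le \cl(\C f(C))$, where crucially $\nabla_{f(a)}$ is a \emph{closed} congruence on $M$.

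Next I would invoke the defining property of $\cl(\C f(C))$ as the largest closed congruence below $\C f(C)$. Since $\nabla_{f(a)}$ is itself closed, it suffices to show $\nabla_{f(a)} \le \C f(C)$; any closed congruence below $\C f(C)$ is automatically below $\cl(\C f(C))$. For this last inequality I would use that $\cl$ is deflationary, so $\nabla_a = \cl(C) \le C$, and then apply the monotone map $\C f$ to obtain $\nabla_{f(a)} = \C f(\nabla_a) \le \C f(C)$, completing the argument.

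I do not expect a genuine obstacle here; the entire content lies in the opening reduction, namely recognising that the abstract closure operator delivers an honest closed congruence $\nabla_a$ which $\C f$ sends to another closed congruence $\nabla_{f(a)}$. Once the problem is rephrased as whether this specific closed congruence sits below $\C f(C)$, the characterisation of $\cl$ as a largest-closed-below operator, combined with monotonicity and the deflationary property of $\cl$, finishes it immediately. The only step warranting a moment's care is confirming that the naturality identity $\C f(\nabla_a) = \nabla_{f(a)}$ applies verbatim to the element $a = (\nabla_L)_*(C)$, which it does since $a$ is simply an element of $L$.
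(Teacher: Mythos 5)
Your proof is correct and follows essentially the same route as the paper's: use that $\cl$ is deflationary and $\C f$ monotone to get $\C f(\cl(C)) \le \C f(C)$, observe that $\C f$ sends the closed congruence $\cl(C) = \nabla_a$ to the closed congruence $\nabla_{f(a)}$, and conclude by the defining property of $\cl$ as the largest closed congruence below. The only difference is that you make the element $a = (\nabla_L)_*(C)$ explicit where the paper simply states that $\C f(\cl(C))$ is closed because $\cl(C)$ is.
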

\begin{proof}
 We have $\cl(C) \le C$ and so $\C f(\cl(C)) \le \C f(C)$. But $\C f(\cl(C))$ is closed since $\cl(C)$ is. Hence
 $\C f(\cl(C)) \le \cl(\C f(C))$ as required.
\end{proof}

We write $q_C\colon L \twoheadrightarrow L/C$ for the quotient map associated to a congruence $C$. Applying the congruence functor to such a map gives
closed quotient of $\C L$ with kernel $\nabla_C$. So we will often identify $\C q_C$ with the quotient $\C L \twoheadrightarrow \C L / \nabla_C$. Indeed, congruences
on $L/C$ can be identified with congruences on $L$ lying above $C$ by the third isomorphism theorem. We will sometimes write $D/C$ for the image of
$D$ under $\C q_C$.

A particular case of the above gives $L / (C \vee \nabla_a) \cong (L/C)/\nabla_{[a]}$
and thus $\nabla_a \vee C = \nabla_b \vee C$ if and only if $(a,b) \in C$. Consequently, we have the following lemma.
\begin{lemma}\label{lem:closure_of_join_with_closed}
 If $q\colon L \twoheadrightarrow L/C$ is a quotient and $a \in L$, then $\cl(C \vee \nabla_a) = \nabla_{q_*q(a)}$ in $\C L$.
\end{lemma}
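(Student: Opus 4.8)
The plan is to first replace $a$ by a canonical representative of its $C$-class, namely $b = q_*q(a)$, and then show directly that $\cl(C \vee \nabla_b) = \nabla_b$. Since $q$ is a quotient we have $qq_* = \mathrm{id}$, so $q(b) = q(q_*q(a)) = q(a)$, and hence the pair $(a,b)$ lies in $C$. By the observation immediately preceding the statement, $\nabla_a \vee C = \nabla_b \vee C$, so that $C \vee \nabla_a = C \vee \nabla_b$ and it suffices to compute $\cl(C \vee \nabla_b)$.

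One inequality is immediate: $\nabla_b$ is a closed congruence lying below $C \vee \nabla_b$, and $\cl$ returns the largest closed congruence below its argument, so $\nabla_b \le \cl(C \vee \nabla_b)$. For the reverse inequality I would appeal to the density criterion recorded in the background. Since $\nabla_b \le C \vee \nabla_b$, we have $\cl(C \vee \nabla_b) \le \nabla_b$ if and only if the canonical quotient $L/\nabla_b \twoheadrightarrow L/(C \vee \nabla_b)$ is dense.

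It then remains to unwind this map and verify density. Under the isomorphisms $L/\nabla_b \cong {\uparrow}b$ and $L/(C \vee \nabla_b) \cong (L/C)/\nabla_{q(b)} \cong {\uparrow}q(b)$, the canonical quotient becomes the map ${\uparrow}b \to {\uparrow}q(b)$ induced by $q$, namely $x \mapsto q(x)$, carrying the bottom element $b$ to the bottom element $q(b) = q(a)$. Density therefore says precisely that $x \ge b$ together with $q(x) = q(b)$ forces $x = b$. This is exactly where the choice $b = q_*q(a)$ is used: from $q(x) = q(b)$ and the adjunction unit $x \le q_*q(x)$ we obtain $x \le q_*q(b) = q_*q(a) = b$, which with $x \ge b$ gives $x = b$. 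Combining the two inequalities yields $\cl(C \vee \nabla_a) = \nabla_b = \nabla_{q_*q(a)}$.

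The only delicate point is the concrete identification of the canonical quotient $L/\nabla_b \twoheadrightarrow L/(C \vee \nabla_b)$ with the $q$-induced map on the relevant upsets, so that density collapses to a statement about bottom elements; once that bookkeeping is in place the density check is a one-line adjunction computation and everything else is formal.
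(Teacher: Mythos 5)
Your argument is correct. The paper itself gives no displayed proof: it derives the lemma as an immediate consequence of the preceding observation that $L/(C \vee \nabla_a) \cong (L/C)/\nabla_{[a]}$ and that $\nabla_x \vee C = \nabla_y \vee C$ iff $(x,y) \in C$. The intended one-liner is a direct computation of the largest closed congruence below $C \vee \nabla_a$: one has $\nabla_c \le C \vee \nabla_a$ iff $\nabla_{c \vee a} \vee C = \nabla_a \vee C$ iff $(c \vee a, a) \in C$ iff $q(c) \le q(a)$ iff $c \le q_*q(a)$, so the largest such $c$ is $q_*q(a)$ and $\cl(C \vee \nabla_a) = \nabla_{q_*q(a)}$. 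You instead first normalise $a$ to $b = q_*q(a)$ and then verify $\cl(C \vee \nabla_b) = \nabla_b$ via the density criterion for $\cl$, reducing to the density of ${\uparrow}b \twoheadrightarrow {\uparrow}q(b)$. Each step checks out: $(a,b) \in C$ because $qq_* = \mathrm{id}$ for a surjection; the identification of the canonical quotient with $x \mapsto q(x)$ on the upsets is the standard bookkeeping you acknowledge; and the final adjunction computation $x \le q_*q(x) = q_*q(b) = b$ is exactly what density requires. Your route is slightly longer but has the merit of making explicit where the adjoint $q_*$ enters; the paper's implicit route is shorter because it computes $(\nabla_L)_*(C \vee \nabla_a)$ directly rather than passing through the density characterisation.
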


\section{First parts and the congruence functor}\label{section:first_parts}

Strictly zero-dimensional biframes are best understood as additional structure placed on their first parts. The first part functor is of lesser interest
for general biframes since it fails to be faithful, but its restriction $\P\colon \StrZdBiFrm \to \Frm$ will play a pivotal role in what follows.

If $\M$ is strictly zero-dimensional then the inclusion $\M_1 \hookrightarrow \M_0$ is epic and so $\P$ is faithful.
Furthermore, important classes of morphisms of strictly zero-dimensional biframes can be characterised by their first parts.

A biframe homomorphism from a strictly zero-dimensional biframe is a surjection if and only if its first part is surjective.
Dense biframe maps in $\StrZdBiFrm$ can also be characterised by their first parts.
\begin{lemma}\label{lem:dense_maps_of_str0d_biframes}
 Let $\L$ be a strictly zero-dimensional biframe. A biframe map $f\colon \L \to \M$ is dense if and only if $f_1$ is injective.
\end{lemma}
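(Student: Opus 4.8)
The plan is to prove both implications directly, using the two defining features of a strictly zero-dimensional $\L$: every $a \in \L_1$ is complemented in $\L_0$ with $a\comp \in \L_2$, and these complements generate $\L_2$. The other ingredient is that any frame homomorphism preserves existing complements, so that $f_0(a\comp) = f_0(a)\comp = f_1(a)\comp$ for $a \in \L_1$; this needs nothing of $\M$ beyond its being a biframe.

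For the forward implication I would suppose $f$ is dense and take $a, b \in \L_1$ with $f_1(a) = f_1(b)$. Since $b$ is complemented, $f_0(a \wedge b\comp) = f_1(a) \wedge f_1(b)\comp = f_1(b) \wedge f_1(b)\comp = 0$, so density forces $a \wedge b\comp = 0$, that is, $a \le b$. By symmetry $b \le a$, hence $a = b$ and $f_1$ is injective. This half uses only that first-part elements are complemented, not that their complements generate $\L_2$.

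For the converse, suppose $f_1$ is injective and take $x \in \L_0$ with $f_0(x) = 0$; the goal is $x = 0$. Here strict zero-dimensionality is used in full to put $x$ in normal form: since $\L_1$ and $\L_2$ generate $\L_0$ and the complements of first-part elements generate $\L_2$, frame distributivity lets me write $x = \bigvee_k (u_k \wedge b_k\comp)$ with $u_k, b_k \in \L_1$. Applying $f_0$ and using preservation of joins, meets and complements, each joinand yields $f_1(u_k) \wedge f_1(b_k)\comp = 0$, i.e.\ $f_1(u_k) \le f_1(b_k)$. The crucial move is to transport this back across $f_1$: from $f_1(u_k \vee b_k) = f_1(b_k)$ and injectivity of $f_1$ I obtain $u_k \le b_k$, whence $u_k \wedge b_k\comp = 0$ and so $x = 0$. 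I expect the only real obstacle to be the bookkeeping behind this normal form and the careful tracking of which part each generator lives in (in particular that $f_0$ restricts to $f_1$ on $\L_1$, so complement preservation gives $f_0(b\comp) = f_1(b)\comp$); once that reduction is secured, each direction is a short computation converting density into injectivity, and back, through the complementation available in $\L_0$.
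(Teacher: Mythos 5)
Your proof is correct and follows essentially the same route as the paper: both directions hinge on the equivalence $a \le b \iff a \wedge b\comp = 0$ for $a,b \in \L_1$ together with the fact that $\L_0$ is generated under joins by elements of the form $u \wedge b\comp$ with $u,b \in \L_1$. Your version merely spells out the normal-form bookkeeping and the order-reflection step that the paper leaves implicit.
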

\begin{proof}
 Observe first that if $a, b \in \L_1$, then $a \le b$ if and only if $a \wedge b\comp = 0$ in $\L_0$.
 In the same way, $f_1(a) \le f_1(b)$ if and only if $f_0(a \wedge b\comp) = f_1(a) \wedge f_1(b)\comp = 0$.
 So if $f_0$ is dense, $f_1$ reflects order and is thus injective. Conversely, if $f_1$ is injective, then $f_0$ is dense,
 since $\L_0$ is generated under joins by elements of the form $a \wedge b\comp$.
\end{proof}

The first part functor $\P\colon\StrZdBiFrm \to \Frm$ has a left adjoint.
\begin{proposition}\label{prop:congruence_free_str0dbifrm}
 The congruence functor $\C\colon \Frm \to \StrZdBiFrm$ is left adjoint to $\P$.
\end{proposition}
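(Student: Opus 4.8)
The plan is to exhibit, for each frame $L$, the map $\nabla_L\colon L \to \nabla L = \P\C L$ as a universal arrow from $L$ to $\P$; since $\nabla_L$ is an isomorphism onto the subframe $\nabla L$ of closed congruences, this would incidentally show the unit of the adjunction to be invertible. Concretely, I must produce for every strictly zero-dimensional biframe $\M$ and every frame homomorphism $g\colon L \to \P\M = \M_1$ a unique biframe homomorphism $h\colon \C L \to \M$ with $h_1 \circ \nabla_L = g$. The whole construction rests on the universal property of $\nabla_L$ recalled above, namely that $\nabla_L$ is initial amongst the frame homomorphisms out of $L$ whose image consists of complemented elements.

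For existence, I would compose $g$ with the inclusion $\M_1 \hookrightarrow \M_0$ to obtain a frame homomorphism $g'\colon L \to \M_0$. Because $\M$ is strictly zero-dimensional, every element of $\M_1$, and hence every element in the image of $g'$, is complemented in $\M_0$, so the universal property yields a unique frame homomorphism $h_0\colon \C L \to \M_0$ with $h_0 \circ \nabla_L = g'$. It then remains to check that $h_0$ underlies a biframe homomorphism $\C L \to \M$. On the first part this is immediate, since the closed congruences constitute $\nabla L$ and $h_0(\nabla_a) = g'(a) = g(a) \in \M_1$. The second part is the only point requiring care, and I regard it as the crux of the argument: since $\Delta_a = \nabla_a\comp$ and frame homomorphisms preserve complements, we have $h_0(\Delta_a) = h_0(\nabla_a)\comp = g(a)\comp$, and strict zero-dimensionality of $\M$ places this complement in $\M_2$. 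As $\Delta L$ is generated under the frame operations by the congruences $\Delta_a$ and $\M_2$ is a subframe, it follows that $h_0(\Delta L) \subseteq \M_2$. Thus $h_0$ restricts to the parts and defines the required biframe homomorphism $h$, which satisfies $h_1 \circ \nabla_L = g$ by construction.

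For uniqueness, suppose $h'\colon \C L \to \M$ is another biframe homomorphism with $h'_1 \circ \nabla_L = g$. Composing with the inclusion $\M_1 \hookrightarrow \M_0$ gives $h'_0 \circ \nabla_L = g'$, so the uniqueness clause of the universal property of $\nabla_L$ forces $h'_0 = h_0$; since a biframe homomorphism is determined by its total part, $h' = h$. Having verified that each $\nabla_L$ is a universal arrow, the adjunction $\C \dashv \P$, together with the naturality of the bijection $\StrZdBiFrm(\C L, \M) \cong \Frm(L, \P\M)$, follows by the standard characterisation of adjoint functors via universal arrows.
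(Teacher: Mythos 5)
Your proof is correct and follows essentially the same route as the paper: invoke the universal property of $\nabla_L$ to obtain the total part of the lifted map, then verify it preserves the first and second parts, with the second part handled exactly as in the paper via complements of first-part elements. Your treatment is merely more explicit about the uniqueness clause and the passage from universal arrows to the adjunction.
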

\begin{proof}
 This is simply a rephrasing of the usual universal property of $\nabla_L\colon L \to \C L$.
 Let $L$ be a frame and $\M$ a strictly zero-dimensional biframe. Suppose we have a frame homomorphism $f\colon L \to \M_1$.
 Then every element in the image of $f$ has a complement in $\M_0$ and thus by the universal property of $\C L$ there is a unique
 $\overline{f}\colon \C L \to \M_0$ making the diagram commute.
\begin{center}
   \begin{tikzpicture}[node distance=3.5cm, auto]
    \node (CL) {$\C L$};
    \node (M) [right of=CL] {$\M$};
    \node (L) [below of=CL] {$L$};
    \draw[->] (L) to node {$\nabla_L$} (CL);
    \draw[->, dashed] (CL) to node {$\overline{f}$} (M);
    \draw[->] (L) to node [swap] {$f$} (M);
   \end{tikzpicture}
\end{center}
 The commutativity of the diagram ensures that $\overline{f}$ maps $\nabla L$ into $\M_1$. Then $\overline{f}$ maps $\Delta L$ into $\M_2$ since these
 subframes are both generated by the complements of elements in the first parts. So $\overline{f}$ is a biframe homomorphism and $\C$ is left adjoint
 to $\P$.
\end{proof}

Since the unit of the adjunction is an isomorphism, $\C$ is fully faithful and $\Frm$ embeds as coreflective subcategory of $\StrZdBiFrm$.
We will denote the counit by $\chi\colon \C \M_1 \to \M$ and call this the \emph{congruential coreflection} of $\M$.

We can now characterise the (essential) fibre of a frame $L$ under the functor $\P$. We say a strictly zero-dimensional biframe $\M$ is
\emph{strictly zero-dimensional over} $L$ if $L \cong \P \M$.

\begin{proposition}\label{thm:dense_quotients_of_congruence_frame}
 The strictly zero-dimensional biframes over $L$ are precisely the dense quotients of $\C L$ (up to isomorphism).
\end{proposition}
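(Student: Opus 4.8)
The plan is to prove the two inclusions of classes separately, in each case transporting the relevant property through the first part functor $\P$ by means of the two characterisations established just above, namely \cref{lem:dense_maps_of_str0d_biframes} and the fact that a biframe map out of a strictly zero-dimensional biframe is a surjection exactly when its first part is.

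First I would show that every dense quotient of $\C L$ is strictly zero-dimensional over $L$. Let $q\colon \C L \twoheadrightarrow \M$ be a dense quotient. Since frame homomorphisms preserve complements and $q$ is surjective on all parts, $\M$ inherits strict zero-dimensionality from $\C L$: each element of $\M_1$ has the form $q_1(a)$ with $a \in \L_1$, its complement $q_0(a\comp) = q_2(a\comp)$ lies in $\M_2$, and these complements generate $\M_2$ because $q_2$ is onto. Now $q_1$ is surjective because $q$ is a quotient, and injective by \cref{lem:dense_maps_of_str0d_biframes} because $q$ is dense; hence $q_1$ is an isomorphism. As $\P\C L = (\C L)_1 = \nabla L \cong L$, this gives $\P\M \cong L$, so $\M$ is strictly zero-dimensional over $L$.

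For the converse I would use the counit $\chi$ of the adjunction of \cref{prop:congruence_free_str0dbifrm}. Let $\M$ be strictly zero-dimensional with $\P\M \cong L$, and consider $\chi\colon \C\M_1 \to \M$. By the triangle identity for the adjunction, the first part $\chi_1$ is inverse to the unit isomorphism $\nabla_{\M_1}$, hence is itself an isomorphism. In particular $\chi_1$ is surjective, so $\chi$ is a surjection, and $\chi_1$ is injective, so $\chi$ is dense by \cref{lem:dense_maps_of_str0d_biframes}. Thus $\chi$ exhibits $\M$ as a dense quotient of $\C\M_1$, and since $\C$ is a functor and $\M_1 \cong L$ we have $\C\M_1 \cong \C L$; therefore $\M$ is a dense quotient of $\C L$ up to isomorphism.

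The argument is essentially a matter of assembling the first-part criteria, so I do not expect a serious obstacle. The one point needing care is that $\chi_1$ being an isomorphism must not be confused with $\chi$ being one: the second part $\chi_2$ may genuinely collapse, and it is precisely this that lets $\chi$ be a proper dense quotient when $\M$ is not congruential. Applying the triangle identity to the correct component, so that both density and surjectivity can be read off from $\chi_1$ alone, is the only subtlety.
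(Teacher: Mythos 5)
Your proof is correct and follows essentially the same route as the paper: one direction via the counit $\chi$ and the triangle identity forcing $\chi_1$ to be an isomorphism, the other by reading off injectivity and surjectivity of $q_1$ from \cref{lem:dense_maps_of_str0d_biframes} and the quotient condition, plus the observation that strict zero-dimensionality passes to quotient images. The extra care you take about $\chi_1$ versus $\chi$ being an isomorphism is exactly the right point to flag.
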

\begin{proof}
 Let $\M$ be a strictly zero-dimensional biframe over $L$ and let  $\chi\colon \C L \to \M$ be its congruential coreflection.
 The triangle identities imply that $\chi_1$ is an isomorphism. Thus, $\chi$ is a dense quotient by \cref{lem:dense_maps_of_str0d_biframes}.
 
 Conversely, suppose $q\colon \C L \to \M$ is a dense biframe quotient. Then $q_1$ is an isomorphism and so $L \cong \M_1$.
 The complements of elements of $\nabla L$ map to complements of elements of $\M_1$ under $q$ and these generate $\M_2$ since the former
 generate $\Delta L$. Thus, $\M$ is strictly zero-dimensional.
\end{proof}

\begin{corollary}\label{cor:fibres_of_P}
 The fibre category $\P^{-1}(L)$ is a preordered class which is dually equivalent to the frame $\C^2 L / \Delta_{\D_{\C L}}$ ---
 the frame of congruences on $\C L$ that lie below $\D_{\C L}$.
\end{corollary}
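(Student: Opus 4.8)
The plan is to read off the fibre from the congruential coreflection $\chi$ and the description of the objects of $\P^{-1}(L)$ already supplied by \cref{thm:dense_quotients_of_congruence_frame}. The preordered-class claim I would dispatch immediately: the morphisms of the fibre are by definition those biframe maps $f$ with $\P f = \mathrm{id}_L$, so any two parallel such maps agree after applying $\P$ and hence, by faithfulness of $\P$, agree outright. Thus there is at most one morphism between any two objects and $\P^{-1}(L)$ is a preorder.

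For the objects, \cref{thm:dense_quotients_of_congruence_frame} tells us they are, up to isomorphism, the dense quotients of $\C L$. As biframe quotients are induced uniquely by quotients of their total parts, each is represented by a single congruence $D$ on $\C L$, i.e.\ an element of $\C^2 L$, and the representing quotient $\C L \twoheadrightarrow \C L/D$ is dense exactly when $\cl(D) = 0$. Since $\cl$ is monotone and $\D_{\C L}$ is the largest dense congruence, this holds precisely when $D \le \D_{\C L}$. Hence the objects are in bijection with the down-set ${\downarrow}\D_{\C L}$ in $\C^2 L$, which is exactly the frame $\C^2 L/\Delta_{\D_{\C L}} \cong {\downarrow}\D_{\C L}$ named in the statement.

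It then remains to transport the order. For $\M$ in the fibre the coreflection $\chi_\M\colon \C L \twoheadrightarrow \M$ is the dense quotient representing it, with kernel the congruence $D_\M \le \D_{\C L}$. Naturality of $\chi$ together with $\P f = \mathrm{id}_L$ forces any fibre morphism $f\colon \M \to \M'$ to satisfy $f \circ \chi_\M = \chi_{\M'}$, so such an $f$ exists if and only if $\chi_{\M'}$ factors through the quotient $\chi_\M$. By the factorisation criterion for biframe quotients this is governed entirely by the total parts, i.e.\ by containment of the kernels $D_\M$ and $D_{\M'}$; equivalently, $f$ exhibits $\M'$ as a further quotient of $\M$, so that the dense sublocale of $\C L$ presented by $\M'$ is contained in that presented by $\M$. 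Feeding this through the object bijection upgrades it to a full, order-reflecting functor, and hence to an (anti)equivalence of $\P^{-1}(L)$ with $\C^2 L/\Delta_{\D_{\C L}}$.

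The step I expect to be the main obstacle is pinning down the direction of this last comparison. The objects are most naturally indexed by their kernel congruences, but the preorder is generated by factorisation of the representing quotients --- equivalently by inclusion of the associated sublocales of $\C L$ --- and the passage from a sublocale to its congruence reverses order. It is exactly this contravariance that records the relationship as the \emph{dual} equivalence asserted in the statement, and getting that direction right (rather than merely producing the object bijection) is where the genuine content lies; every other ingredient is immediate from the coreflection of \cref{prop:congruence_free_str0dbifrm}, the faithfulness of $\P$, and \cref{thm:dense_quotients_of_congruence_frame}.
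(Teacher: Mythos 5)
Your proposal is correct and follows exactly the route the paper intends: the corollary is left as an immediate consequence of \cref{thm:dense_quotients_of_congruence_frame}, with the preorder claim coming from faithfulness of $\P$, the object bijection from identifying dense quotients of $\C L$ with congruences below $\D_{\C L}$ (i.e.\ the downset ${\downarrow}\D_{\C L} \cong \C^2 L/\Delta_{\D_{\C L}}$), and the order comparison from the factorisation criterion for quotients via the counit $\chi$. Your closing discussion of the direction of the duality is also consistent with the paper's convention, under which $\C L$ is the largest and $\C L/\D_{\C L}$ the smallest object of the fibre.
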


\begin{remark}
 \Cref{thm:dense_quotients_of_congruence_frame} was first mentioned (without proof) in \cite{asymmetricCong}. In that paper the authors
 discuss the relationship between $\P\colon \RBiFrm \to \Frm$ and $\C\colon \Frm \to \RBiFrm$ in the context of regular biframes. These are not
 adjoint, but it is shown that $\P$ is faithful and $\C$ is its unique pseudosection. Using similar techniques one can show that $\StrZdBiFrm$ is
 the largest full subcategory of $\RBiFrm$ for which $\P$ restricts to a coreflector.
\end{remark}

A result of \cite{BanaschewskiFrithGilmour} states that every compact congruence frame is the congruence frame of a Noetherian frame (and conversely).
In fact, every compact strictly zero-dimensional biframe is of this form.
\begin{proposition}\label{lem:compact_str0d_implies_congruential}
 Every compact strictly zero-dimensional biframe is the congruence biframe of a Noetherian frame.
\end{proposition}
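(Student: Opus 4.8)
The plan is to set $L = \M_1 = \P\M$ and to prove two things: that $L$ is Noetherian, and that the congruential coreflection $\chi\colon \C L \to \M$ is an isomorphism. By \cref{thm:dense_quotients_of_congruence_frame} we already know $\chi$ is a dense biframe quotient, and since $\C L$ is by definition the congruence biframe of $L$, establishing these two facts yields the result. The two tasks turn out to be essentially independent, so I would treat them separately.

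For the Noetherian claim I would show directly that every $a \in \M_1$ is a compact element of $\M_1$. Write $a = \bigvee_\alpha a_\alpha$ as a join in $\M_1$ (equivalently in $\M_0$, as $\M_1$ is a subframe), and recall that $a$ is complemented in $\M_0$ with complement $a\comp$. Then $\bigvee_\alpha a_\alpha \vee a\comp = a \vee a\comp = 1$, so compactness of $\M_0$ gives a finite $F$ with $b \vee a\comp = 1$ for $b = \bigvee_{\alpha \in F} a_\alpha \le a$. Meeting with $a$ and using frame distributivity, $a = a \wedge (b \vee a\comp) = (a \wedge b) \vee (a \wedge a\comp) = b$, so the join is already attained finitely. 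Hence every element of $\M_1$ is compact and $L = \M_1$ is Noetherian.

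For the isomorphism the idea is to exploit that $\C L$ is zero-dimensional, hence regular, together with the compactness of $\M_0$. Since $\chi$ is a quotient it suffices to show $\chi_0$ is injective. The frame surjection $\chi_0\colon \C L \twoheadrightarrow \M_0$ exhibits $\M_0$ as the frame of opens of a sublocale of $\C L$, and this sublocale is compact precisely because $\M_0$ is a compact frame. In the regular frame $\C L$ every compact sublocale is closed — the pointfree counterpart of the fact that compact subsets of a Hausdorff space are closed (see \cite{PicadoPultr}) — so $\chi_0$ corresponds to a closed sublocale. But $\chi_0$ is also dense, and the only dense closed sublocale is the improper one; hence $\chi_0$ is an isomorphism. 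Being a biframe quotient (so with surjective part maps) whose total part is now invertible, $\chi$ is an isomorphism of biframes.

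Combining the two parts, $\M \cong \C L$ is the congruence biframe of the Noetherian frame $L$. The main obstacle is the second part: one must correctly identify the quotient $\chi_0$ with a sublocale of $\C L$ and invoke the localic fact that compact sublocales of regular locales are closed, after which density forces triviality. It is worth noting that this argument is self-contained and does not invoke the theorem of \cite{BanaschewskiFrithGilmour}; rather it recovers that result as the special case $\M = \C L$.
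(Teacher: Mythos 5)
Your proof is correct and follows essentially the same route as the paper: Noetherianity of $\M_1$ via compactness of complemented elements in the compact total part, followed by the observation that the dense quotient $\chi\colon \C\M_1 \to \M$ provided by \cref{thm:dense_quotients_of_congruence_frame} must be injective because $\C\M_1$ is regular and $\M_0$ is compact. The paper invokes this last fact in the equivalent form ``a dense frame homomorphism from a regular frame to a compact frame is injective'' rather than via closed sublocales, but the argument is the same.
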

\begin{proof}
 Let $\L$ be a compact strictly zero-dimensional biframe. Every element of $\L_1$ is complemented and thus compact in $\L_0$.
 Hence $\L_1$ is a Noetherian frame. By \cref{thm:dense_quotients_of_congruence_frame},
 the coreflection $\chi\colon \C \L_1 \to \L$ is a dense quotient. But a dense frame homomorphism from a regular frame to a compact frame
 is injective and so $\chi$ is an isomorphism.
\end{proof}
This gives a characterisation of compact congruence frames as those compact frames admitting a strictly zero-dimensional biframe structure.
A similar characterisation in terms of ordered topological spaces was proved in \cite{SkulaSpaces} and \cite{IsbellDissolute}.
We will generalise our characterisation to the non-compact case in \cref{section:clear_elements}.
For now, we note that \cref{lem:compact_str0d_implies_congruential} also immediately yields the following equivalence of categories.
\begin{corollary}
 The functors $\C$ and $\P$ restrict to an equivalence between the category of Noetherian frames and the category of
 compact strictly zero-dimensional biframes.
\end{corollary}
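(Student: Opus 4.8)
The plan is to upgrade the adjunction $\C \dashv \P$ of \cref{prop:congruence_free_str0dbifrm} to an equivalence on the two stated subcategories by checking that both its unit and its counit become natural isomorphisms there. Since an adjunction whose unit and counit are both natural isomorphisms is an adjoint equivalence, establishing this will immediately give the result. So the work reduces to two tasks: confirming that $\C$ and $\P$ actually restrict to functors between the subcategory of Noetherian frames and the subcategory of compact strictly zero-dimensional biframes, and then checking the unit and counit on these subcategories.

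First I would verify that the two functors restrict appropriately. For $\P$, the proof of \cref{lem:compact_str0d_implies_congruential} already shows that if $\L$ is a compact strictly zero-dimensional biframe then every element of $\P\L = \L_1$ is complemented, hence compact, in $\L_0$, so that $\P\L$ is Noetherian. For $\C$, I would invoke the converse direction of the result of \cite{BanaschewskiFrithGilmour} quoted just above the proposition: the congruence frame of a Noetherian frame is compact, so $\C L$ is a compact strictly zero-dimensional biframe whenever $L$ is Noetherian. The unit $\mathbbm{1}_\Frm \to \P\C$ is already a natural isomorphism for every frame, as noted immediately after \cref{prop:congruence_free_str0dbifrm}, so in particular it restricts to an isomorphism on Noetherian frames.

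For the counit $\chi\colon \C\P\M \to \M$ I would appeal directly to \cref{lem:compact_str0d_implies_congruential}, whose proof establishes that the congruential coreflection $\chi\colon \C\L_1 \to \L$ is an isomorphism exactly when $\L$ is compact strictly zero-dimensional. Hence both unit and counit are isomorphisms on the respective subcategories, completing the equivalence. Essentially all the genuine content has already been carried out in \cref{lem:compact_str0d_implies_congruential}, so I do not expect a serious obstacle; the only new ingredient worth isolating is the verification that $\C$ does not leave the class of compact biframes, which is precisely the converse half of the Banaschewski--Frith--Gilmour theorem and must be invoked before the abstract fact about adjunctions with invertible unit and counit can be applied.
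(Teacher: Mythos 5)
Your argument is correct and matches the route the paper intends: the corollary is stated as an immediate consequence of \cref{lem:compact_str0d_implies_congruential}, whose proof supplies exactly the two facts you isolate (Noetherianness of the first part and invertibility of the counit $\chi$), with the remaining ingredients being the always-invertible unit and the Banaschewski--Frith--Gilmour converse that $\C L$ is compact for Noetherian $L$. Nothing further is needed.
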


The above equivalence can alternatively be viewed as a restriction of the equivalence between the category of coherent frames and
the category of compact zero-dimensional biframes, which is itself a restriction of the equivalence described in
\cite{StablyContinuousFrames} between the categories of stably continuous frames and compact regular biframes.

\section{The category of strictly zero-dimensional biframes}\label{section:category_of_str0d_biframes}

The functor $\P\colon \StrZdBiFrm \to \Frm$ has strong lifting properties. In fact, we will show that $\P$ is a \emph{solid} or
\emph{semi-topological} functor (see \cite{tholenSolid}). In particular, this implies $\StrZdBiFrm$ is complete and cocomplete.

\Cref{cor:fibres_of_P} shows that we can define a strictly zero-dimensional biframe by its first part $L$ and a dense
congruence on $\C L$. The map sending a frame to its fibre under $\P$ is not functorial, but the situation improves
if we ignore the density requirement.

Consider the functor $\Frm \xrightarrow{\C^2} \Frm \hookrightarrow \Cat$ where $\C^2 = \C \circ \C$ with $\C$ viewed
as a functor from $\Frm$ to $\Frm$. Applying the Grothendieck construction we obtain a category ${\int}\C^2$ and an opfibration
$F\colon {\int} \C^2 \to \Frm$ which is even topological since the above functor factors through the category of suplattices.

The category ${\int}\C^2$ consists of objects $(L, C)$ where $L$ is a frame and $C \in \C^2 L$ and morphisms $f\colon (L, C) \to (L', C')$
where $f\colon L \to L'$ is a frame homomorphism and $\C^2 f (C) \le C'$. The functor $F$ simply sends $(L,C)$ to $L$.
A morphism $f\colon (L,C) \to (L',C')$ is $F$-final if and only if $C' = \C^2 f (C)$ and $F$-initial if and only if $C = \C^2 f_*(C')$.

Since $\C f\colon \C L \to \C L'$ induces a map from $\C L / C$ to $\C L' / C'$ precisely when $\C^2 f (C) \le C'$,
we may equivalently describe the morphisms in this category as frame homomorphisms $f\colon L \to L'$ such that $\C f$ induces a map from
$\C L / C$ to $\C L' / C'$.

Now $\StrZdBiFrm$ corresponds (up to equivalence) to the full subcategory of ${\int}\C^2$ consisting of the objects $(L, C)$ where $C \le \D_{\C L}$.
Furthermore, restricting $F\colon {\int}\C^2 \to \Frm$ to this subcategory gives the first part functor $\P$.

\begin{lemma}
 The inclusion $\StrZdBiFrm \hookrightarrow {\int}\C^2$ has a left adjoint.
\end{lemma}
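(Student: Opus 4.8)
The plan is to build an explicit reflection. Given an object $(L, C) \in {\int}\C^2$, the closure $\cl(C)$ is a closed congruence on $\C L$, say $\cl(C) = \nabla_K$ for the unique $K = (\nabla_{\C L})_*(C) \in \C L$. Writing $q\colon L \twoheadrightarrow L/K$ for the associated frame quotient, I would define the reflection of $(L,C)$ to be $(L/K,\, \C^2 q(C))$, with unit the morphism of ${\int}\C^2$ underlain by $q$. This is a legitimate (indeed $F$-final) morphism, since its target congruence is \emph{defined} to be $\C^2 q(C)$, so the required inequality $\C^2 q(C) \le \C^2 q(C)$ holds trivially. Note that when $C$ is already dense we have $K = 0$ and $q$ is an isomorphism, as one expects of a reflection.

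First I would check that the candidate lies in $\StrZdBiFrm$, that is, that $\C^2 q(C)$ is a dense congruence on $\C(L/K)$. Identifying $\C q\colon \C L \to \C(L/K)$ with the closed quotient $\C L \twoheadrightarrow \C L/\nabla_K$, the frame $\C L/\C^2 q(C)$ is just $\C L / C$, and the canonical map $\C L/\nabla_K \twoheadrightarrow \C L/C$ induced by $\nabla_K \le C$ has kernel $\C^2 q(C)$. Because $\cl(C) = \nabla_K$, the background equivalence between closure and density applied inside $\C L$ shows precisely that this canonical map is dense. Transporting along the isomorphism $\C L/\nabla_K \cong \C(L/K)$, this says that $\C^2 q(C)$ is dense, i.e.\ $\C^2 q(C) \le \D_{\C(L/K)}$, so the object does lie in $\StrZdBiFrm$.

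The heart of the argument is the universal property. Let $(M, E)$ be an object of $\StrZdBiFrm$, so $E \le \D_{\C M}$ and hence $E$ is dense, and let $g\colon (L, C) \to (M, E)$ be a morphism of ${\int}\C^2$, so $g\colon L \to M$ satisfies $\C^2 g(C) \le E$. I must produce a unique factorisation through $q$, and the key step is to show that $g$ collapses $K$. Applying \cref{lem:closure_and_Cf} to the frame homomorphism $\C g\colon \C L \to \C M$ and the congruence $C \in \C^2 L$ gives $\C^2 g(\nabla_K) = \C^2 g(\cl(C)) \le \cl(\C^2 g(C)) \le \cl(E) = 0$, the final equality holding because $E$ is dense. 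Since $\C^2 g(\nabla_K) = \nabla_{\C g(K)}$ and $\nabla_{\C M}$ is injective, this forces $\C g(K) = 0$, which means $g$ identifies every pair belonging to $K$; hence $g$ factors uniquely as $\bar g \circ q$ for a frame homomorphism $\bar g\colon L/K \to M$. Finally $\C^2 \bar g(\C^2 q(C)) = \C^2 g(C) \le E$, so $\bar g$ is a morphism $(L/K, \C^2 q(C)) \to (M, E)$, and it is the unique such morphism since morphisms of ${\int}\C^2$ are determined by their underlying frame maps.

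The main obstacle is exactly this collapsing step: everything else is bookkeeping with the Grothendieck construction, but the factorisation relies essentially on the interaction of $\cl$ with $\C^2 g$ from \cref{lem:closure_and_Cf}, combined with the density of $E$. Once this is in hand, the triangle identities are immediate and the assignment $(L,C) \mapsto (L/K, \C^2 q(C))$ assembles into a left adjoint to the inclusion.
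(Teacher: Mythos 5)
Your proposal is correct and follows essentially the same route as the paper: the reflection is the quotient $L \twoheadrightarrow L/\nabla_*(C)$, and the factorisation of an arbitrary morphism into a strictly zero-dimensional object hinges on exactly the same computation $\C^2 g(\cl(C)) \le \cl(\C^2 g(C)) = 0$ via \cref{lem:closure_and_Cf} and density of the target congruence. The only addition is your explicit verification that the reflected object lands in $\StrZdBiFrm$, which the paper leaves implicit.
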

\begin{proof}
 Suppose $(L, C)$ is an object in ${\int}\C^2$. Consider the quotient map $\eta_L\colon L \twoheadrightarrow L/\nabla_*(C)$.
 We can view $\C\eta_L$ as the quotient $\C L \twoheadrightarrow \C L/\cl(C)$ and hence $\C^2\eta_L(C) = C/\cl(C)$,
 using the element $C/\cl(C) \in \C(\C L / \cl(C))$ to refer to the corresponding element of $\C^2 (L/\nabla_*(C))$.
 So $\eta_L\colon (L, C) \to (L/\nabla_*(C), C/\cl(C))$ is a final morphism in ${\int}\C^2$.
 
 Now take $(M, D) \in {\int}\C^2$ with $D \le \D_{\C M}$ and consider a morphism $f\colon (L,C) \to (M,D)$.
 We show that there is a unique $\widetilde{f}$ making the following diagram commute.
 \begin{center}
   \begin{tikzpicture}[node distance=3.5cm, auto]
    \node (LoverC) {$(L / \nabla_*(C), C/\cl(C))$};
    \node (M) [right of=CL] {$(M, D)$};
    \node (L) [below of=CL] {$(L, C)$};
    \draw[->>] (L) to node {$\eta_L$} (LoverC);
    \draw[->, dashed] (LoverC) to node {$\widetilde{f}$} (M);
    \draw[->] (L) to node [swap] {$f$} (M);
   \end{tikzpicture}
 \end{center}
 We have $\C^2 f(C) \le D \le \D_{\C M}$ and so $\C^2 f(\cl(C)) \le \cl(\C^2 f(C)) = 0$ by \cref{lem:closure_and_Cf}.
 But $\C^2 f(\cl(C)) = \nabla_{\C f(\nabla_*(C))}$ and hence $\C f(\nabla_*(C)) = 0$.
 Thus, $f$ factors through $\eta_L$ to give a unique frame homomorphism $\widetilde{f}\colon L/\nabla_*(C) \to M$ and this lifts to a
 morphism of ${\int}\C^2$ since $\eta_L$ is final.
 Thus $\StrZdBiFrm \hookrightarrow {\int}\C^2$ has a left adjoint and $\eta_L$ is the unit of the adjunction.
\end{proof}

So $\P$ is the composition of the embedding of a reflective subcategory and a topological functor and is therefore solid.
It follows that $\StrZdBiFrm$ is complete and cocomplete. More explicitly, we have the following propositions.

\begin{proposition}
 Consider a diagram $D\colon \mathbb{I} \to \StrZdBiFrm$ with $DX = \C L_X / C_X$ and
 let $(\alpha_X\colon L_X \to L)_{X \in \mathbb{I}}$ be the colimit of $\P D$ in $\Frm$.
 Then the colimit of $D$ consists of the object $\C L / \bigvee_{X} \C^2 \alpha_X(C_X)$ and the morphisms given by lifting each $\C \alpha_X$
 to a map between the appropriate quotients.
\end{proposition}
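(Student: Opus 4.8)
The plan is to exploit the description of $\P$ as a solid functor established just above: it factors as the reflective embedding $\iota\colon\StrZdBiFrm\hookrightarrow{\int}\C^2$ followed by the topological functor $F\colon{\int}\C^2\to\Frm$. Colimits in a reflective subcategory are computed by forming the colimit in the ambient category and then applying the reflector, while colimits along a topological functor are computed by taking final lifts. So I would first form the colimit of $\P D$ in $\Frm$, then lift it finally to ${\int}\C^2$, and finally reflect the result back into $\StrZdBiFrm$, and I claim that this last reflection recovers precisely $\C L/\bigvee_X\C^2\alpha_X(C_X)$.

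First I would compute the colimit of $\iota D$ in ${\int}\C^2$. Since $F$ is topological, this is the final lift of the colimit cocone $(\alpha_X\colon L_X\to L)$ of $\P D=F\iota D$. Because a morphism $(L_X,C_X)\to(L,C)$ exists exactly when $\C^2\alpha_X(C_X)\le C$, the final lift is the object $(L,C)$ with $C$ the least congruence for which every $\alpha_X$ lifts to a morphism $(L_X,C_X)\to(L,C)$, namely $C=\bigvee_X\C^2\alpha_X(C_X)$; the cocone morphisms are the lifts of the $\alpha_X$. This is consistent with the stated characterisation of $F$-final morphisms.

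Next I would apply the reflector $R\colon{\int}\C^2\to\StrZdBiFrm$ constructed above, which sends $(L,C)$ to $(L/\nabla_*(C),\,C/\cl(C))$. The key step is to identify this reflected object, as a strictly zero-dimensional biframe, with $\C L/C$. For this I would use that $\C\eta_L\colon\C L\to\C(L/\nabla_*(C))$ is the closed quotient $\C L\twoheadrightarrow\C L/\cl(C)$, so that $\C(L/\nabla_*(C))\cong\C L/\cl(C)$ as biframes, and then invoke the third isomorphism theorem: quotienting further by $C/\cl(C)$ yields $(\C L/\cl(C))/(C/\cl(C))\cong\C L/C$, since $\cl(C)\le C$. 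Tracking the cocone through the reflector shows that its legs are exactly the maps induced by the $\C\alpha_X$ between the quotients $\C L_X/C_X\to\C L/C$, which exist precisely because $\C^2\alpha_X(C_X)\le C$.

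I expect the main obstacle to be this last identification of the reflected final lift with $\C L/C$. In particular, it is tempting to worry that $C=\bigvee_X\C^2\alpha_X(C_X)$ need not be a dense congruence on $\C L$; indeed $\P$ is a right adjoint and so need not preserve the colimit, which means the first part of the colimit is the possibly proper quotient $L/\nabla_*(C)$ rather than $L$ itself. The point of routing the computation through the reflector is precisely that this causes no trouble: whether or not $C$ is dense, the reflector formula together with the third isomorphism theorem delivers $\C L/C$ on the nose, and it is only the bookkeeping of these two isomorphisms, together with the verification that the cocones match, that requires care.
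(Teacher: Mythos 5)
Your proof is correct and follows exactly the route the paper intends: the proposition is stated as an explicit unpacking of the solidity of $\P$, i.e.\ one computes the colimit as the final lift along the topological functor $F$ followed by the reflector into $\StrZdBiFrm$, and your identification of the reflected object $(L/\nabla_*(C), C/\cl(C))$ with $\C L/C$ via the third isomorphism theorem is the key bookkeeping step. Your remark that $C=\bigvee_X\C^2\alpha_X(C_X)$ need not be dense (so that the first part of the colimit may be a proper quotient of $L$) is exactly the right point to flag, and it is handled correctly.
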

\begin{proposition}
 Consider a diagram $D\colon \mathbb{I} \to \StrZdBiFrm$ with $DX = \C L_X / C_X$ and
 let $(\beta_X\colon L \to L_X)_{X \in \mathbb{I}}$ be the limit of $\P D$ in $\Frm$.
 Then the limit of $D$ consists of the object $\C L / \bigwedge_{X} (\C^2 \beta_X)_*(C_X)$ and the morphisms given by lifting each $\C \beta_X$
 to a map between the appropriate quotients.
\end{proposition}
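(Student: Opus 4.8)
The plan is to compute the limit in the ambient category ${\int}\C^2$, where the topological functor $F$ makes limits transparent, and then to use reflectivity of $\StrZdBiFrm$ to conclude that the resulting object already lies in $\StrZdBiFrm$ and is the limit there. Under the equivalence set up above, the diagram $D$ corresponds to a diagram in ${\int}\C^2$ with objects $(L_X, C_X)$, the arrow attached to each morphism of $\mathbb{I}$ being the first part of the corresponding component of $D$. Applying $F$ to this embedded diagram returns $\P D$, so by hypothesis its limit in $\Frm$ is the cone $(\beta_X\colon L \to L_X)_X$.

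First I would determine the $F$-initial lift of this cone, which, since $F$ is topological, is the limit of the embedded diagram in ${\int}\C^2$. The fibre of $F$ over $L$ is the complete lattice $\C^2 L$, and I claim the initial structure induced by the source $(\beta_X)$ is $C \coloneqq \bigwedge_X (\C^2\beta_X)_*(C_X)$. To see this, let $(\gamma_X\colon (M,E) \to (L_X,C_X))_X$ be any cone; the universal property of the $\Frm$-limit yields a unique frame map $u\colon M \to L$ with $\beta_X u = \gamma_X$, and it remains to check that $u$ is a morphism $(M,E) \to (L,C)$, i.e.\ that $\C^2 u(E) \le C$. Using the adjunctions $\C^2\beta_X \dashv (\C^2\beta_X)_*$, this inequality holds if and only if $\C^2(\beta_X u)(E) = \C^2\gamma_X(E) \le C_X$ for every $X$, which is precisely the statement that each $\gamma_X$ is a morphism. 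Hence $(L, C)$ with the projections $\beta_X$ is the limit of the embedded diagram in ${\int}\C^2$.

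It then remains to transport this limit back into $\StrZdBiFrm$. By the preceding lemma the inclusion $\StrZdBiFrm \hookrightarrow {\int}\C^2$ is reflective, and a full reflective subcategory of a complete category is closed under limits, with the inclusion creating them. Since $F$ is topological and $\Frm$ is complete, ${\int}\C^2$ is complete, so the limit of $D$ in $\StrZdBiFrm$ is computed exactly as in ${\int}\C^2$. In particular the object $(L, C)$ automatically satisfies $C \le \D_{\C L}$ and therefore corresponds to the strictly zero-dimensional biframe $\C L / \bigwedge_X (\C^2\beta_X)_*(C_X)$; its projections are the maps induced by each $\C\beta_X$ on the appropriate quotients, as claimed.

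I expect the only real content to lie in identifying the $F$-initial lift of a whole source, rather than of a single arrow, with the meet $\bigwedge_X (\C^2\beta_X)_*(C_X)$ of the individual initial structures; this is what produces the formula in the statement. By contrast, the fact that this meet is a dense congruence --- which would be awkward to verify by hand --- comes for free from closure of the reflective subcategory under limits, and this is the chief advantage of routing the argument through ${\int}\C^2$.
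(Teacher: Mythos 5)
Your argument is correct and is exactly the derivation the paper intends: the paper states this proposition without proof as an immediate consequence of $F\colon {\int}\C^2 \to \Frm$ being topological (so limits are $F$-initial lifts, giving the meet $\bigwedge_X (\C^2\beta_X)_*(C_X)$) together with $\StrZdBiFrm$ being a full reflective subcategory of ${\int}\C^2$ and hence closed under limits. You have simply made explicit the verification of the initial lift and the density of the resulting congruence, both of which check out.
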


\begin{remark}
 Products and coequalisers of strictly zero-dimensional biframes in $\BiFrm$ are strictly zero-dimensional and so the standard biframe constructions
 give an alternative way to compute products and coequalisers in $\StrZdBiFrm$. Furthermore, while the equaliser in $\BiFrm$ of morphisms between strictly
 zero-dimensional biframes need not be strictly zero-dimensional, the construction is easily modified to give an alternative way to compute equalisers
 in $\StrZdBiFrm$. Another way to compute coproducts would be of particular interest; however, I do not know of one at present.
\end{remark}

\section{Biframes of distinguished congruences}\label{section:str0d_biframes_of_congruences}

We can now characterise the monomorphisms and extremal epimorphisms in $\StrZdBiFrm$.
\begin{lemma}\label{lem:monos_in_str0dbifrm}
 A morphism $f\colon \L \to \M$ in $\StrZdBiFrm$ is monic if and only if it is dense if and only if $\P f$ is injective.
\end{lemma}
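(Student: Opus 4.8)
The plan is to exploit two features of the first part functor established above: $\P$ is faithful, and by \cref{prop:congruence_free_str0dbifrm} it is a right adjoint, with left adjoint $\C$. First I would observe that the equivalence ``$f$ dense $\iff \P f$ injective'' is exactly \cref{lem:dense_maps_of_str0d_biframes}, since $\P f = f_1$. This reduces the claim to the equivalence ``$f$ monic $\iff \P f$ injective'', which I would prove by passing through the corresponding statement in $\Frm$, using the standard fact that the monomorphisms in $\Frm$ are precisely the injective homomorphisms.

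For ``$f$ monic $\Rightarrow \P f$ injective'' I would use that a right adjoint preserves monomorphisms. Since $\C \dashv \P$, a monic $f$ in $\StrZdBiFrm$ is sent by $\P$ to a monomorphism $\P f$ in $\Frm$, which is therefore injective. For the converse ``$\P f$ injective $\Rightarrow f$ monic'' I would instead use that a faithful functor reflects monomorphisms: an injective $\P f$ is monic in $\Frm$, and faithfulness of $\P$ then forces $f$ to be monic.

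Since both abstract principles are routine, I expect the only real input to be the characterisation of monomorphisms in $\Frm$ as injections --- and within this, the nontrivial ``monic $\Rightarrow$ injective'' direction, which one sees by separating a witnessing pair $a \neq b$ in $\L_1$ using homomorphisms out of the free frame on one generator. If one prefers to avoid invoking ``right adjoints preserve monos'' as a black box, the same content can be made concrete: given $u \neq v \colon K \to \L_1$ with $f_1 u = f_1 v$, transposing across $\C \dashv \P$ yields distinct biframe maps $\hat u, \hat v \colon \C K \to \L$ with $f \hat u = f \hat v$, directly witnessing that $f$ is not monic. This transposition is where the adjunction does the essential work, converting a failure of injectivity on first parts into a failure of monicity in $\StrZdBiFrm$.
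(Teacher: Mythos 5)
Your proposal is correct and follows essentially the same route as the paper, whose entire proof is that the result ``follows immediately from \cref{lem:dense_maps_of_str0d_biframes} since $\P$ is a faithful right adjoint'' --- i.e.\ right adjoints preserve monos, faithful functors reflect them, and monos in $\Frm$ are the injections. Your explicit unwinding via transposition across $\C \dashv \P$ is just a concrete rendering of the same argument.
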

\begin{proof}
 The result follows immediately from \cref{lem:dense_maps_of_str0d_biframes} since $\P$ is a faithful right adjoint.
\end{proof}

\begin{lemma}\label{lem:epis_in_str0dbifrm}
 A morphism $f\colon \L \to \M$ in $\StrZdBiFrm$ is an extremal epimorphism if and only if it is a regular epimorphism if and only if
 it is a closed quotient.
\end{lemma}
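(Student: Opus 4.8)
The plan is to prove the three conditions equivalent by the cycle: closed quotient $\Rightarrow$ regular epimorphism $\Rightarrow$ extremal epimorphism $\Rightarrow$ closed quotient. The step from regular to extremal is a general categorical fact (a coequaliser is always a strong, hence extremal, epimorphism), so no work is needed there. Underlying the other two steps is the observation that a closed quotient of a strictly zero-dimensional biframe is again strictly zero-dimensional. Indeed, if $\L$ is strictly zero-dimensional and $a \in \L_0$, then in the closed quotient $\L_0/\nabla_a$ each $x \vee a$ with $x \in \L_1$ has complement $x\comp \vee a$ (one checks $(x \vee a) \wedge (x\comp \vee a) = a$ and $(x \vee a) \vee (x\comp \vee a) = 1$), and since $\L_2$ is generated by the $x\comp$ the images $x\comp \vee a$ generate the second part. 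Thus a closed quotient is understood as a morphism $q \colon \L \to \L/\nabla_a$ in $\StrZdBiFrm$ whose total part is the frame-theoretic closed quotient $\L_0 \twoheadrightarrow \L_0/\nabla_a$ for some $a \in \L_0$.

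For the implication \emph{extremal $\Rightarrow$ closed}, I would first show that every morphism $f \colon \L \to \M$ factors as a closed quotient followed by a dense monomorphism. Setting $a := f_{0*}(0) \in \L_0$, the total part $f_0$ annihilates exactly ${\downarrow}a$ and so factors as $\L_0 \twoheadrightarrow \L_0/\nabla_a \xrightarrow{\bar f_0} \M_0$ with $\bar f_0$ dense. As $q$ is a surjection, $\bar f_0$ preserves the two parts and yields a biframe morphism $\bar f \colon \L/\nabla_a \to \M$ which, being dense, is monic by \cref{lem:monos_in_str0dbifrm}. Hence $f = \bar f \circ q$. If $f$ is an extremal epimorphism, the monomorphism $\bar f$ must be an isomorphism, so $f$ is (isomorphic to) the closed quotient $q$.

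For the implication \emph{closed $\Rightarrow$ regular}, I would realise the closed quotient $q \colon \L \to \L/\nabla_a$ as an explicit coequaliser. Writing $a = \bigvee_{i \in I}(x_i \wedge z_i\comp)$ with $x_i, z_i \in \L_1$ (possible since such elements generate $\L_0$, as in \cref{lem:dense_maps_of_str0d_biframes}), let $R$ be the free frame on the set $I$ and let $g, h \colon R \to \L_1$ be the frame homomorphisms determined by $g(i) = x_i$ and $h(i) = x_i \wedge z_i$. By the adjunction $\C \dashv \P$ these lift uniquely to morphisms $g', h' \colon \C R \rightrightarrows \L$ in $\StrZdBiFrm$, and I would then check that $q$ is their coequaliser. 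Since $x_i \wedge z_i\comp \le a$ we have $x_i \vee a = (x_i \wedge z_i) \vee a$ in $\L_0$, whence $q g' = q h'$; and given any $t \colon \L \to \mathcal{N}$ with $t g' = t h'$, the equalities $t_0(x_i) = t_0(x_i \wedge z_i)$ force $t_0(x_i) \le t_0(z_i)$, so that $t_0(a) = \bigvee_i t_0(x_i) \wedge t_0(z_i)\comp = 0$. Thus $t_0$, and hence $t$, factors uniquely through $q$, so $q$ is a regular epimorphism.

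I expect the main obstacle to be this last step: choosing the generating family for $a$, passing through the free strictly zero-dimensional biframe $\C R$, and verifying that the induced factorisation through $q$ is a genuine (and unique) biframe homomorphism. The remaining points — that closed quotients remain in $\StrZdBiFrm$ and that the density factorisation lifts from total parts to biframes — are routine once one uses that $\P$ is faithful, that surjections are detected on first parts, and \cref{lem:dense_maps_of_str0d_biframes,lem:monos_in_str0dbifrm}.
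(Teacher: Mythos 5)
Your proof is correct, but it takes a genuinely different route from the paper. The paper piggybacks on the machinery of Section~\ref{section:category_of_str0d_biframes}: since ${\int}\C^2$ is topological over $\Frm$ it inherits (regular epi, mono)-factorisations from $\Frm$, with regular epimorphisms the final surjections; $\StrZdBiFrm$ is regular-epireflective in ${\int}\C^2$, so the inclusion preserves and reflects regular epimorphisms; and final surjections out of $(L,C)$ are computed to be exactly the closed quotients. You instead argue entirely inside $\StrZdBiFrm$: you factor an arbitrary morphism as a closed quotient (through $a = f_{0*}(0)$) followed by a dense, hence monic, map using \cref{lem:dense_maps_of_str0d_biframes,lem:monos_in_str0dbifrm}, which disposes of ``extremal $\Rightarrow$ closed''; and you exhibit each closed quotient explicitly as a coequaliser of a pair $\C R \rightrightarrows \L$ built from a generating family $a = \bigvee_i (x_i \wedge z_i\comp)$ via the adjunction $\C \dashv \P$, which is the only nontrivial content of ``closed $\Rightarrow$ regular'' (your verification there is sound: $qg' = qh'$ and $tg'=th'$ can both be tested on first parts by faithfulness of $\P$ and freeness of $R$, and $t_0(a)=0$ then forces the factorisation through the total-part quotient, which induces a unique biframe factorisation). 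The paper's argument is shorter given the Grothendieck-construction apparatus already in place and yields the factorisation system as a formal consequence; yours is self-contained, avoids ${\int}\C^2$ altogether, and has the bonus of producing an explicit coequaliser presentation of every closed quotient. One small point worth making explicit in your write-up is the observation you state in passing, that closed quotients of strictly zero-dimensional biframes are again strictly zero-dimensional (so that $\L/\nabla_a$ really is an object of $\StrZdBiFrm$); your computation of the complements $x\comp \vee a$ does establish this.
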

\begin{proof}
 The regular epimorphisms in $\Frm$ are the surjections and $\Frm$ has (regular epi, mono)-factorisations. Since ${\int}\C^2$ is topological
 over $\Frm$, it too has (regular epi, mono)-factorisations and the regular epimorphisms in ${\int}\C^2$ are the final surjections.
 
 So $\StrZdBiFrm$ is a regular-epireflective subcategory of ${\int}\C^2$. The inclusion functor thus preserves and reflects regular epimorphisms
 and $\StrZdBiFrm$ has (regular epi, mono)-factorisations.
 
 A final surjection in ${\int}\C^2$ has the form $q_A\colon (L,C) \twoheadrightarrow (L/A, \C^2 q_A (C))$.
 Hence the regular (and extremal) epimorphisms in $\StrZdBiFrm$ are surjections of the form
 $\C L / C \twoheadrightarrow (\C  L / \nabla_A) / \C q_{\nabla_A} (C) \cong \C L / (\nabla_A \vee C) \cong (\C L / C) / \C q_C(\nabla_A)$,
 which are precisely the closed surjections.
\end{proof}

So the extremal quotients of a strictly zero-dimensional biframe are in bijection with the elements of its total part. This generalises the relationship
between a frame and its congruence frame.

We can stretch the analogy even further. Let $\M$ be a strictly zero-dimensional biframe and let $\chi\colon \C \M_1 \to \M$ be its
congruential coreflection. Every element $a \in \M_0$ may be associated with a congruence $\chi_*(a)$ on $\M_1$, where $\chi_*$ is
the right adjoint of $\chi_0$. In this way, we may view the elements of any strictly zero-dimensional biframe
as certain congruences on the first part.
It can be instructive to think of $\M$ as the frame $\M_1$ equipped with a frame $\M_0$ of \emph{distinguished congruences}.
Indeed, the following lemma shows that these are precisely the congruences which are induced on the first part by extremal quotients.
So from the point of view of the frame $\M_1$, we can only form quotients by distinguished congruences.

\begin{lemma}\label{lem:closed_quotients_of_str0d_biframe}
 If $\M$ is a strictly zero-dimensional biframe over $L$ and $a \in \M_0$ then $\M/\nabla_a$ is strictly zero-dimensional over $L / \chi_*(a)$.
\end{lemma}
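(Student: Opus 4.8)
The plan is to represent $\M$ concretely as a dense quotient of $\C L$ and then identify the congruence that the closed quotient $\M \to \M/\nabla_a$ induces on the first part. By \cref{thm:dense_quotients_of_congruence_frame} the congruential coreflection $\chi\colon \C L \to \M$ is a dense quotient with $\chi_1$ an isomorphism, so I may take $\M_0 = \C L/C$ with $\chi_0 = q_C$ and $C = \ker\chi_0 \le \D_{\C L}$. Writing $\tilde a := \chi_*(a) = (q_C)_*(a) \in \C L$, surjectivity of $q_C$ gives $q_C(\tilde a) = a$, and the identification $(\C L/C)/\nabla_a \cong \C L/(C \vee \nabla_{\tilde a})$ of iterated closed quotients lets me write $\M/\nabla_a \cong \C L/(C \vee \nabla_{\tilde a})$. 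That $\M/\nabla_a$ is again strictly zero-dimensional is immediate from \cref{lem:epis_in_str0dbifrm}, since it is a closed quotient of $\M$. It therefore remains only to show that the first part of $\M/\nabla_a$ is $L/\tilde a = L/\chi_*(a)$.

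Since $\chi_1$ is an isomorphism I identify $\M_1 \cong L$ via $b \mapsto q_C(\nabla_b)$, and the first part of $\M/\nabla_a$ is the image of $\M_1$ under the quotient. Two generators $q_C(\nabla_b)$ and $q_C(\nabla_{b'})$ are identified there exactly when $q_C(\nabla_b) \vee a = q_C(\nabla_{b'}) \vee a$ in $\M_0$; using $q_C(\tilde a) = a$ and that $q_C$ preserves joins, this rewrites as $(\nabla_b \vee \tilde a, \nabla_{b'} \vee \tilde a) \in C$. So the induced congruence on $L$ is $K = \{(b,b') : (\nabla_b \vee \tilde a, \nabla_{b'} \vee \tilde a) \in C\}$, and I must prove $K = \tilde a$. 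The inclusion $\tilde a \subseteq K$ is trivial: if $(b,b') \in \tilde a$ then $\nabla_b \vee \tilde a = \nabla_{b'} \vee \tilde a$, which is certainly a $C$-related pair.

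The main obstacle is the reverse inclusion $K \subseteq \tilde a$, which amounts to showing the quotient does not collapse first-part elements beyond what $\tilde a$ already forces. The key computation is to apply \cref{lem:closure_of_join_with_closed} to the quotient $q_C$ and the element $\tilde a \in \C L$: since $(q_C)_* q_C(\tilde a) = (q_C)_*(a) = \tilde a$, this yields $\cl(C \vee \nabla_{\tilde a}) = \nabla_{\tilde a}$ in $\C^2 L$. By the closure criterion for density, the induced map $g\colon \C L/\nabla_{\tilde a} \to \C L/(C \vee \nabla_{\tilde a})$ is then dense. Now $\C L/\nabla_{\tilde a}$ is a closed quotient of the congruence biframe $\C L$ and hence strictly zero-dimensional, with first part the image of $\nabla L$, namely $\{\nabla_b \vee \tilde a : b \in L\}$ under the identification $\C L/\nabla_{\tilde a} \cong {\uparrow}\tilde a$; so \cref{lem:dense_maps_of_str0d_biframes} tells me $g_1$ is injective. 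Finally, given $(b,b') \in K$, chaining the relations $(\nabla_b, \nabla_b \vee \tilde a) \in \nabla_{\tilde a}$, $(\nabla_b \vee \tilde a, \nabla_{b'} \vee \tilde a) \in C$ and $(\nabla_{b'} \vee \tilde a, \nabla_{b'}) \in \nabla_{\tilde a}$ shows $(\nabla_b, \nabla_{b'}) \in C \vee \nabla_{\tilde a}$, which forces $g_1(\nabla_b \vee \tilde a) = g_1(\nabla_{b'} \vee \tilde a)$; injectivity of $g_1$ then gives $\nabla_b \vee \tilde a = \nabla_{b'} \vee \tilde a$, that is $(b,b') \in \tilde a$. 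The only genuinely delicate point is recognising that density together with strict zero-dimensionality, via \cref{lem:dense_maps_of_str0d_biframes}, is exactly what upgrades the density of $g$ to the injectivity needed on first parts.
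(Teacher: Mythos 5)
Your proof is correct and follows essentially the same route as the paper: represent $\M$ as a dense quotient $\C L/C$, use \cref{lem:closure_of_join_with_closed} to get $\cl(C \vee \nabla_{\chi_*(a)}) = \nabla_{\chi_*(a)}$, and conclude that $\M/\nabla_a$ is a dense quotient of $\C L/\nabla_{\chi_*(a)} \cong \C(L/\chi_*(a))$. Your explicit verification that the induced congruence $K$ on the first part equals $\chi_*(a)$ just unwinds what the paper obtains in one step by citing \cref{thm:dense_quotients_of_congruence_frame}.
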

\begin{proof}
 Suppose $\M \cong \C L / D$. Then $\M/\nabla_a \cong \C L / (D \vee \nabla_{\chi_*(a)})$
 and we have that $\cl(D \vee \nabla_{\chi_*(a)}) = \nabla_{\chi_*(a)}$ by \cref{lem:closure_of_join_with_closed}.
 Thus, $\M/\nabla_a$ is a dense quotient of $\C L / \nabla_{\chi_*(a)} \cong \C (L / \chi_*(a))$.
\end{proof}

Congruence biframes are strictly zero-dimensional biframes for which all congruences are distinguished. On the other extreme we have $\C L / \D_{\C L}$ ---
the smallest strictly zero-dimensional biframe over $L$. Here the distinguished congruences are the regular elements of $\C L$, the so-called
\emph{smooth} congruences. Hence we find that smooth congruences are distinguished for any strictly zero-dimensional biframe.

\begin{lemma}
 If $\M$ is a strictly zero-dimensional biframe, then $\chi_*(\M)$ contains every smooth congruence on $\M_1$.
\end{lemma}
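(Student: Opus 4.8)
The plan is to reduce the claim to a simple monotonicity property of saturated elements. First I would represent $\M$ concretely. Since $\M$ is strictly zero-dimensional over $L := \M_1$, \cref{thm:dense_quotients_of_congruence_frame} together with the identification of $\StrZdBiFrm$ with the subcategory of ${\int}\C^2$ on objects $(L,C)$ with $C \le \D_{\C L}$ lets me write $\M \cong \C L / C$ for a congruence $C \le \D_{\C L}$ on $\C L$, with the congruential coreflection $\chi\colon \C L \to \M$ having total part the quotient $q_C\colon \C L \twoheadrightarrow \C L / C$. Hence $\chi_* = (q_C)_*$, and its image $\chi_*(\M)$ is exactly the set of $C$\emph{-saturated} elements of $\C L$, that is, those $x$ with $x = (q_C)_* q_C(x)$. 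Equivalently, $x$ is the largest element $C$-related to itself; this largest element is indeed $C$-related to $x$ because $q_C$ is a quotient, so $q_C (q_C)_* = \mathbbm{1}$.

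Next I would reinterpret the hypothesis. By definition the smooth congruences on $L$ are the regular elements of $\C L$, and these are precisely the $\D_{\C L}$-saturated elements: the quotient $\C L \twoheadrightarrow \C L / \D_{\C L}$ is the Booleanisation $x \mapsto x^{**}$, whose right adjoint has image the regular elements of $\C L$. So the statement reduces to showing that every $\D_{\C L}$-saturated element is $C$-saturated.

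This last implication is the only real content, and it follows from $C \le \D_{\C L}$ by an order argument. Let $x$ be $\D_{\C L}$-saturated, so that $x = \bigvee\{z \mid (x,z) \in \D_{\C L}\}$. If $(x,y) \in C$, then $(x,y) \in \D_{\C L}$ because $C \le \D_{\C L}$, and hence $y \le x$. Joining over the whole $C$-class gives $(q_C)_* q_C(x) \le x$; the reverse inequality is automatic, so $x = (q_C)_* q_C(x)$ is $C$-saturated. Combining the two steps, every smooth congruence on $\M_1$ lies in $\chi_*(\M)$.

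I expect the main difficulty to be purely organisational: keeping straight the three descriptions of the elements in play --- distinguished congruences as the image of $\chi_*$, as the $C$-saturated elements of $\C L$, and smooth congruences as the $\D_{\C L}$-saturated (equivalently regular) elements. Once these identifications are in place, the mathematical heart --- that saturation is monotone in the congruence, so that a larger congruence has fewer saturated elements --- is the one-line argument above. The only subtlety to verify carefully is that the joins defining the saturations genuinely lie in the relevant congruence classes, which holds because each congruence is a subframe of $\C L \times \C L$ and so is closed under arbitrary joins.
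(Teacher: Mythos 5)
Your proof is correct and takes essentially the same approach as the paper, which leaves the lemma as an immediate consequence of identifying $\M$ with $\C L/C$ for some $C \le \D_{\C L}$ and observing that the distinguished congruences of the smallest fibre element $\C L/\D_{\C L}$ are exactly the regular (smooth) elements of $\C L$. Your explicit monotonicity-of-saturation argument is precisely what justifies the paper's ``hence''.
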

\begin{corollary}\label{cor:smooth_elements_fixed_by_chi}
 The right adjoint $\chi_*$ preserves first parts.
\end{corollary}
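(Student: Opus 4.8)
The plan is to read off the corollary from the preceding lemma once we observe that the first part of $\C \M_1$ sits inside the smooth congruences. Recall that the first part of $\C \M_1$ is the subframe $\nabla \M_1$ of closed congruences $\nabla_a$ with $a \in \M_1$, and that each $\nabla_a$ is complemented in $\C \M_1$ with complement the open congruence $\Delta_a$. Since a complemented element of any frame coincides with its double pseudocomplement, it is regular; hence every closed congruence on $\M_1$ is smooth.

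By the preceding lemma every smooth congruence lies in the image of $\chi_*$, so in particular $\nabla_a$ does for each $a \in \M_1$. As $\chi_0$ is a dense surjection (its coreflection $\chi$ being a dense quotient by \cref{thm:dense_quotients_of_congruence_frame}), we have $\chi_0 \chi_* = \mathrm{id}$ and the image of $\chi_*$ is precisely the set of fixed points of the nucleus $\chi_* \chi_0$. Thus $\chi_* \chi_0 (\nabla_a) = \nabla_a$.

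To finish I would take $a$ in the first part $\M_1$ of $\M$ and compute $\chi_*(a)$. Because $\chi_1 \colon \nabla \M_1 \to \M_1$ is an isomorphism, again by \cref{thm:dense_quotients_of_congruence_frame}, we have $\chi_0(\nabla_a) = a$, and therefore $\chi_*(a) = \chi_* \chi_0(\nabla_a) = \nabla_a$, which lies in the first part $\nabla \M_1$ of $\C \M_1$. Hence $\chi_*$ carries the first part of $\M$ into the first part of $\C \M_1$, which is exactly the assertion that $\chi_*$ preserves first parts.

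There is very little to obstruct here: the two ingredients are the identification of the image of $\chi_*$ with the fixed points of $\chi_* \chi_0$ and the elementary fact that complemented elements are regular, neither of which is deep. I expect the only care needed to be bookkeeping around the identifications $\M_1 \cong \nabla \M_1$ and $\chi_0(\nabla_a) = a$. If one preferred to bypass the lemma, the same conclusion follows directly from the fact that a dense quotient reflects complements: for complemented $a$ one checks $\chi_*(a) \wedge \chi_*(a\comp) = 0$ (by density of $\chi_0$) and $\chi_*(a) \vee \chi_*(a\comp) = 1$ (since $\chi_*(a) \ge \nabla_a$ and $\chi_*(a\comp) \ge \Delta_a$), forcing $\chi_*(a) = \nabla_a$; but routing through the lemma is the cleaner option.
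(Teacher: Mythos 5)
Your proof is correct and follows the paper's intended route: the corollary is stated there without proof as an immediate consequence of the preceding lemma, and your argument supplies exactly the missing details (closed congruences are complemented, hence regular, hence smooth; the image of $\chi_*$ is the set of fixed points of $\chi_*\chi_0$; and $\chi_0(\nabla_a)=a$ then gives $\chi_*(a)=\nabla_a$). The direct alternative you sketch at the end, using that a dense quotient reflects complements, is also valid but not needed.
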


A more interesting example comes from classical topology.
Let $(X, \tau)$ be a topological space and let $\upsilon$ be the topology generated by the closed sets of $X$. Then letting $\sigma$ denote that join
topology $\tau \vee \upsilon$ we obtain a strictly zero-dimensional biframe $\Sk X = (\sigma,\tau,\upsilon)$ called the
\emph{Skula biframe} of $X$ (see \cite{StrictlyZeroDimensional}). This strictly zero-dimensional biframe only permits us to take \emph{spatially induced}
quotients of the first part, i.e., quotients induced by (Skula-closed) subspaces of $X$.

The assignment $X \mapsto \Sk X$ gives a functor from $\Top\op$ to $\StrZdBiFrm$ which has a left adjoint that sends a strictly zero-dimensional
biframe $\M$ to a space whose points are those of $\M_0$ and whose open sets come from $\M_1$. This adjunction restricts to a dual equivalence between
the category of $T_0$ spaces and the category of strictly zero-dimensional biframes with spatial total part.

This allows us to deal with even non-sober $T_0$ spaces in the pointfree setting.
In fact, if $\M$ is a Skula biframe, then the spatial reflection of $\C\P \M$ corresponds to the \emph{sobrification} of the underlying space.
Further discussion can be found in \cite{congruenceThesis}.

So sobrification appears as the spatial aspect of the congruential coreflection. This suggests that the congruential coreflection itself may be
some kind of pointfree analogue of sobrification. There are various senses in which this is the case and some of these will be the subject of a
later paper.

\section{Clear elements and characterising congruence biframes}\label{section:clear_elements}

Let $\M$ be a strictly zero-dimensional biframe and let $\chi\colon \C \M_1 \to \M$ be its congruential coreflection.
By analogy to congruence frames we call elements of $\M_1$ the \emph{closed elements} of $\M$ and we may define a function
$\cl\colon \M_0 \to \M_0$ so that $\cl(a)$ is the largest closed element below $a$. As before, this map is monotone, deflationary and idempotent
and it preserves finite meets.
It will be useful to have a lemma that generalises the topological formula for closure in a subspace.
\begin{lemma}\label{lem:closure_in_subspace}
 Let $q\colon \L \twoheadrightarrow \M$ be a biframe surjection and let $a \in \M_0$.
 Then $\cl(a) = q(\cl(q_*(a)))$.
\end{lemma}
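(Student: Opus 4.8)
The plan is to establish the equality by antisymmetry, proving $q(\cl(q_*(a))) \le \cl(a)$ and $\cl(a) \le q(\cl(q_*(a)))$ separately. Throughout I would use only three ingredients: that $\cl(a)$ is by definition the largest closed element (i.e.\ element of the first part) below $a$; that $q$ is a frame surjection on total parts with right adjoint $q_*$, so that $q q_* \le \mathrm{id}$ and $q(x) \le y \iff x \le q_*(y)$; and that, since $q$ is a biframe surjection, it restricts to a surjection $q_1\colon \L_1 \twoheadrightarrow \M_1$ of first parts. In particular $q$ carries closed elements of $\L$ to closed elements of $\M$, and every closed element of $\M$ is hit.

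For the inequality $q(\cl(q_*(a))) \le \cl(a)$, I would argue directly. Since $\cl$ is deflationary, $\cl(q_*(a)) \le q_*(a)$, and applying the monotone map $q$ gives $q(\cl(q_*(a))) \le q(q_*(a)) \le a$ by the counit inequality. Moreover $\cl(q_*(a))$ lies in $\L_1$, so its image $q(\cl(q_*(a)))$ lies in $\M_1$ and is therefore a closed element of $\M$ below $a$. As $\cl(a)$ is the largest such element, the inequality follows at once.

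The reverse inequality is where the surjectivity of $q_1$ does the real work. Since $\cl(a)$ is a closed element of $\M$, surjectivity of $q_1$ lets me choose $c \in \L_1$ with $q(c) = \cl(a)$. From $q(c) = \cl(a) \le a$ and the adjunction I obtain $c \le q_*(a)$, and since $c$ is closed this forces $c \le \cl(q_*(a))$. Applying $q$ and using $q(c) = \cl(a)$ then yields $\cl(a) \le q(\cl(q_*(a)))$, completing the argument.

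I do not expect a serious obstacle: the lemma is essentially a formal consequence of the adjunction $q \dashv q_*$ together with the characterisation of closed elements as first-part elements. The one point requiring care is that the reverse inequality genuinely needs $q$ to be a \emph{surjection} --- it is surjectivity of $q_1$ that allows closed elements of $\M$ to be lifted to closed elements of $\L$ --- so the formula should not be expected for an arbitrary biframe map. If a more computational route were wanted, one could instead write $\M \cong \C L / D$ and appeal to \cref{lem:closure_of_join_with_closed}, but the adjunction argument is shorter and more transparent.
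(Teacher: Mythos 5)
Your proof is correct and follows essentially the same route as the paper's: both arguments show that $q(\cl(q_*(a)))$ is a closed element below $a$ and then use surjectivity of $q_1$ together with the adjunction $q \dashv q_*$ to see that it dominates every closed element below $a$. The only difference is presentational --- you split the verification into two inequalities, whereas the paper directly verifies the ``largest closed element below $a$'' characterisation.
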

\begin{proof}
 First note that $q(\cl(q_*(a)))$ is a closed element less than $a$. Now suppose $q(c) \le a$.
 Then $c \le q_*(a)$ and so $c \le \cl(q_*(a))$. Therefore, $q(c) \le q(\cl(q_*(a)))$. Since $q_1$ is surjective, every closed element of $\M$ is of
 the from $q(c)$ for some $c \in \L_1$ and thus $q(\cl(q_*(a)))$ is the largest closed element below $a$.
\end{proof}

The next lemma shows that closure interacts well with the right adjoint of $\chi$.
\begin{lemma}\label{lem:closure_preserved_by_chi_adjoint}
 If $a \in \M_0$ then $\chi_*(\cl(a)) = \cl(\chi_*(a))$.
\end{lemma}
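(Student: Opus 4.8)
The statement compares two different closure operators: the operator $\cl$ on $\M_0$ introduced at the start of this section (the largest closed element of $\M$ below a given element) and the operator $\cl$ on the congruence frame $\C\M_1$ from the background section (the largest closed congruence below a given congruence). Both sides of the claimed identity live in $\C\M_1$. My plan is to reduce everything to the just-proved subspace formula together with a single fact about the composite $\chi_*\chi_0$.

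First I would recall that by \cref{thm:dense_quotients_of_congruence_frame} the coreflection $\chi\colon \C\M_1 \to \M$ is a dense biframe surjection, so \cref{lem:closure_in_subspace} applies with $q = \chi$ and gives $\cl(a) = \chi_0\bigl(\cl(\chi_*(a))\bigr)$ for every $a \in \M_0$, where the inner closure is now taken in $\C\M_1$. Applying $\chi_*$ to both sides turns the goal into the identity $\chi_*\chi_0(c) = c$ for the closed congruence $c = \cl(\chi_*(a))$.

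It therefore remains to check that $\chi_*\chi_0$ fixes every closed congruence $c \in \nabla\M_1$, and here I would combine three facts. Since $\chi$ is a quotient, $\chi_0$ is surjective, so $\chi_0\chi_* = \mathrm{id}_{\M_0}$; in particular $\chi_0(\chi_*\chi_0(c)) = \chi_0(c)$. Next, $\chi_0(c) = \chi_1(c)$ lies in $\M_1$, and by \cref{cor:smooth_elements_fixed_by_chi} the right adjoint $\chi_*$ preserves first parts, so $\chi_*\chi_0(c)$ is again a closed congruence. Finally, $\chi_1$ is an isomorphism (by a triangle identity, as in the proof of \cref{thm:dense_quotients_of_congruence_frame}), hence injective; as $c$ and $\chi_*\chi_0(c)$ are two closed congruences with the same image under $\chi_1 = \chi_0|_{\nabla\M_1}$, they coincide. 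Substituting $c = \cl(\chi_*(a))$ then yields $\chi_*(\cl(a)) = \cl(\chi_*(a))$.

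The only real obstacle is the step that $\chi_*\chi_0$ is the identity on closed congruences: in general $\chi_*$ is only a right inverse to $\chi_0$ on all of $\M_0$, and one must know that on the closed part it genuinely inverts the isomorphism $\chi_1$ rather than merely sitting above the intended value. This is exactly what \cref{cor:smooth_elements_fixed_by_chi} secures, after which the argument is routine. As a cross-check, one can instead prove the identity directly by two inequalities: $\cl(\chi_*(a)) \le \chi_*(\cl(a))$ holds because $\chi_0(\cl(\chi_*(a)))$ is a closed element of $\M$ below $a$, hence below $\cl(a)$; and $\chi_*(\cl(a)) \le \cl(\chi_*(a))$ holds because $\chi_*(\cl(a))$ is closed (again by \cref{cor:smooth_elements_fixed_by_chi}) and lies below $\chi_*(a)$. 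This confirms the result even without appeal to \cref{lem:closure_in_subspace}.
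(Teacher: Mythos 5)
Your proof is correct and follows essentially the same route as the paper: apply \cref{lem:closure_in_subspace} with $q = \chi$ and then use \cref{cor:smooth_elements_fixed_by_chi} to conclude that $\chi_*\chi_0$ fixes closed congruences. The extra detail you give for that last step, and the two-inequality cross-check at the end, are just expansions of what the paper's two-line proof leaves implicit.
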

\begin{proof}
 By \cref{lem:closure_in_subspace} we have $\chi_*(\cl(a)) = \chi_*\chi(\cl(\chi_*(a)))$.
 But $\chi_*\chi$ fixes closed elements by \cref{cor:smooth_elements_fixed_by_chi} and so the result follows.
\end{proof}

We may now generalise the notion of a clear congruence to elements of any strictly zero-dimensional biframe.
\begin{definition}
 An element $a$ of a strictly zero-dimensional biframe $\M$ is called \emph{clear} if it is the largest element of $\M_0$
 with closure $\cl(a)$.
\end{definition}
\begin{lemma}\label{lem:clear_element_characterisation}
 Let $a \in \M_0$ and let $c = \cl(a)$. The following are equivalent:
\begin{enumerate}[(1)]
  \item $a$ is clear
  \item $\P(\M / \nabla^{\M_0}_a)$ is Boolean
  \item $\chi_*(a)$ is a clear congruence
  \item $\chi_*(a) = \partial_c$.
 \end{enumerate}
\end{lemma}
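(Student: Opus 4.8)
The plan is to treat (2)–(4) as essentially reformulations of one another and to reserve the real work for relating them to (1). Throughout write $c = \cl(a)$. Since $c$ is a closed element we have $\chi_*(c) = \nabla_c$, and \cref{lem:closure_preserved_by_chi_adjoint} then gives $\cl(\chi_*(a)) = \chi_*(\cl(a)) = \chi_*(c) = \nabla_c$; in particular $\chi_*(a) \le \partial_c$ holds automatically. Now \cref{lem:closed_quotients_of_str0d_biframe} identifies $\P(\M/\nabla_a)$ with $\M_1/\chi_*(a)$, and this is Boolean precisely when $\chi_*(a)$ induces a Boolean quotient of $\M_1$, i.e.\ precisely when $\chi_*(a)$ is a clear congruence; this is (2)$\Leftrightarrow$(3). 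Finally, a clear congruence is by definition the largest congruence with its own closure, so a clear congruence whose closure is $\nabla_c$ is nothing but $\partial_c$; combined with the identity $\cl(\chi_*(a)) = \nabla_c$ already noted, this yields (3)$\Leftrightarrow$(4).

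For (4)$\Rightarrow$(1) I would argue directly. Suppose $\chi_*(a) = \partial_c$ and let $b \in \M_0$ satisfy $\cl(b) = c$. Then $\cl(\chi_*(b)) = \chi_*(\cl(b)) = \nabla_c$ by \cref{lem:closure_preserved_by_chi_adjoint}, so $\chi_*(b) \le \partial_c = \chi_*(a)$ by the maximality defining $\partial_c$. Since $\chi_*$ is the right adjoint of the surjection $\chi_0$ it reflects order, whence $b \le a$. Thus $a$ is the largest element of $\M_0$ with closure $c$ and is therefore clear.

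The hard direction is (1)$\Rightarrow$(4). Here the inequality $\chi_*(a) \le \partial_c$ is free, and the whole problem is to promote it to an equality. Applying $\chi_0$ to this inequality shows $\chi_0(\partial_c) \ge a$, so it suffices to prove that $\chi_0(\partial_c)$ has closure exactly $c$: maximality of the clear element $a$ would then force $\chi_0(\partial_c) \le a$, hence $\chi_0(\partial_c) = a$ and $\chi_*(a) = \chi_*\chi_0(\partial_c) \ge \partial_c$, giving the desired equality. Via \cref{lem:closure_preserved_by_chi_adjoint} the required identity $\cl(\chi_0(\partial_c)) = c$ is equivalent to $\cl(\chi_*\chi_0(\partial_c)) = \nabla_c$, and since $\partial_c$ is the largest congruence with closure $\nabla_c$ this in turn is equivalent to $\chi_*\chi_0(\partial_c) = \partial_c$; that is, to the assertion that $\partial_c$ is a \emph{distinguished} congruence of $\M$.

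I expect this last point to be the main obstacle, since density of $\chi_0$ by itself does not prevent the nucleus $\chi_*\chi_0$ from enlarging the closure of a congruence. The cleanest route I would attempt is to show that every clear congruence is smooth; because smooth congruences are distinguished in every strictly zero-dimensional biframe, this would give $\chi_*\chi_0(\partial_c) = \partial_c$ outright. Failing a direct proof of smoothness, I would use that the defining dense congruence of $\M$ lies below $\D_{\C\M_1}$, so that the nucleus $\chi_*\chi_0$ is dominated by the double-negation nucleus and $\chi_*\chi_0(\partial_c) \le \partial_c^{**}$; it then remains to verify the purely congruence-theoretic identity $\cl(\partial_c^{**}) = \nabla_c$, whence $\cl(\chi_*\chi_0(\partial_c)) \le \cl(\partial_c^{**}) = \nabla_c$ completes the argument. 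Establishing this stability of the closure of $\partial_c$ under double negation is where I anticipate the genuine difficulty.
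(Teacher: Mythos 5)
Your handling of the equivalences (2)$\Leftrightarrow$(3)$\Leftrightarrow$(4) and of the implication (4)$\Rightarrow$(1) is correct and essentially identical to the paper's. The gap is in (1)$\Rightarrow$(4). Your reduction of that implication to the statement that $\partial_c$ is a \emph{distinguished} congruence (i.e.\ fixed by $\chi_*\chi_0$) is valid, but both routes you propose for establishing this amount to one and the same claim, namely that $\partial_c$ is a regular element of $\C\M_1$: since $\partial_c \le \partial_c^{**}$ always holds, $\cl(\partial_c^{**}) = \nabla_c$ already forces $\partial_c^{**} \le \partial_c$ by maximality of $\partial_c$, so the ``fallback'' via the double-negation nucleus is not weaker than the ``cleanest route''. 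And this claim is false in general. Observe that it makes no use of the hypothesis that a clear element with closure $c$ exists; if it held for every $c$ and every frame $L$, then every $\partial_c$ would be distinguished in every strictly zero-dimensional biframe over $L$ (by the paper's observation that $\chi_*(\M)$ contains every smooth congruence), so by your own (4)$\Rightarrow$(1) every closed element of every such biframe would be clarifiable, and \cref{lem:str0d_biframe_congruential_iff_no_missing_clear_elements} would then force $\D_{\C L} = 0$, i.e.\ $\C L$ Boolean, for \emph{every} frame $L$ --- which fails already for $L = \mathcal{O}(\mathbb{R})$. Indeed, distinguishedness of $\partial_c$ in $\M$ is \emph{equivalent} to clarifiability of $c$ (one direction is (4)$\Rightarrow$(1) applied to $\chi(\partial_c)$, the other is precisely the implication you are trying to prove), so no argument that ignores the clear element $a$ until the final maximality step can succeed.

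The paper closes the cycle through (2) instead, and that is where the real content lies: assuming $a$ clear, one first shows using \cref{lem:closure_in_subspace} that $0$ is a clear element of $\M/\nabla^{\M_0}_a$ (any $x \ge a$ with $\cl(x) \vee a = a$ has $\cl(x) = \cl(a)$, hence $x \le a$ by clearness of $a$), and then notes that a dense element $d$ of $\P(\M/\nabla^{\M_0}_a)$ satisfies $\cl(d\comp) = d^* = 0$, whence $d\comp = 0$ and $d = 1$; so $\P(\M/\nabla^{\M_0}_a)$ is Boolean. From (2) one then reaches (4) by the chain you have already established. This direct argument for (1)$\Rightarrow$(2) is the piece missing from your proposal.
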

\begin{proof}
 If $\P(\M / \nabla^{\M_0}_a)$ is Boolean, then $\chi_*(a)$ is clear by \cref{lem:closed_quotients_of_str0d_biframe}.
 If $\chi_*(a)$ is clear, then $\chi_*(a) = \partial_c$ by \cref{lem:closure_preserved_by_chi_adjoint}.
 
 Now suppose $\chi_*(a) = \partial_c$ and take $b \in \M_0$ with $\cl(b) = c$. By \cref{lem:closure_preserved_by_chi_adjoint} we have
 $\nabla^{\M_1}_c \le \chi_*(b) \le \partial_c$ and so $b = \chi\chi_*(b) \le a$ and $a$ is clear.
 
 Finally, suppose that $a$ is clear and take $x \in \M/\nabla^{\M_0}_a$ such that $\cl(x) = 0$.
 Identifying elements of $\M/\nabla^{\M_0}_a$ with the elements of $\M$ lying above $a$ and using \cref{lem:closure_in_subspace},
 we have that $x \ge a$ and $\cl(x) \vee a = a$. Hence $\cl(x) = \cl(a)$. Since $a$ is clear, it follows that $x \le a$.
 But then $x = 0$ in $\M/\nabla^{\M_0}_a$ and so $0$ is clear in $\M/\nabla^{\M_0}_a$.
 
 We now show that $N = \P(\M/\nabla^{\M_0}_a)$ is Boolean. Let $d$ be a dense element of $N$. This means that $\cl(d\comp) = d^* = 0$.
 But $0$ is clear in $\M/\nabla^{\M_0}_a$ and thus $d\comp = 0$.
 So $d = 1$ and $N$ is Boolean.
\end{proof}
\begin{corollary}
 If $a \in \M_0$ is clear, then every element $b \ge a$ is also clear.
\end{corollary}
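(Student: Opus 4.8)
The plan is to deduce this immediately from the characterisation of clear elements in \cref{lem:clear_element_characterisation}, using condition~(2): an element is clear exactly when the first part of its associated closed quotient is Boolean. So I would fix a clear element $a \in \M_0$ and an element $b \ge a$, set out to show that $\P(\M/\nabla^{\M_0}_b)$ is Boolean, and then read off clearness of $b$ from \cref{lem:clear_element_characterisation}.

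First I would form the closed quotient $q\colon \M \twoheadrightarrow \M' \coloneqq \M/\nabla^{\M_0}_a$. Since $a$ is clear, $N \coloneqq \P\M'$ is Boolean by \cref{lem:clear_element_characterisation}. Writing $[b] \coloneqq q(b) \in \M'_0$, the inequality $b \ge a$ gives $\nabla^{\M_0}_b \ge \nabla^{\M_0}_a$, and the compatibility of closed biframe quotients with composition yields $\M/\nabla^{\M_0}_b \cong \M'/\nabla^{\M'_0}_{[b]}$. Applying \cref{lem:closed_quotients_of_str0d_biframe} to the strictly zero-dimensional biframe $\M'$ and the element $[b]$, I obtain that $\M'/\nabla^{\M'_0}_{[b]}$ is strictly zero-dimensional over $N/\chi'_*([b])$, where $\chi'$ denotes the congruential coreflection of $\M'$. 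Hence $\P(\M/\nabla^{\M_0}_b) \cong N/\chi'_*([b])$ is a frame quotient of $N$.

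It then only remains to observe that a frame quotient of a Boolean frame is again Boolean. Since $N$ is Boolean, this makes $N/\chi'_*([b])$ Boolean, and \cref{lem:clear_element_characterisation} then declares $b$ clear. The Boolean-quotient fact is standard and quickly verified through the associated nucleus $j$: if $x$ is a fixed point of $j$ then $j(x\comp)$ is its complement, since $x \wedge j(x\comp) = j(x \wedge x\comp) = j(0)$ is the bottom element, while the join of $x$ and $j(x\comp)$ in the quotient dominates $j(x \vee x\comp) = 1$ and is therefore the top.

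I do not expect a genuine obstacle here, as the argument is essentially a bookkeeping exercise over the two cited lemmas. The only point needing a little care is the identification $\M/\nabla^{\M_0}_b \cong \M'/\nabla^{\M'_0}_{[b]}$, i.e.\ that taking the closed quotient by $b$ coincides with first quotienting by $a$ and then by the image of $b$; this is just the standard behaviour of closed quotients under composition together with $b \ge a$.
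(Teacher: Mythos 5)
Your argument is correct and is exactly the natural way to derive this: the paper states it as an immediate corollary of \cref{lem:clear_element_characterisation} without proof, and your route via condition (2), the third isomorphism for closed quotients, \cref{lem:closed_quotients_of_str0d_biframe}, and the fact that quotients of Boolean frames are Boolean fills in precisely the intended details. No gaps.
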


Unlike the case of congruence biframes, clear elements of general strictly zero-dimensional biframes might sometimes fail to exist.
That is, there may be no clear element with a given closure.
In fact, the existence of all clear elements characterises the congruential strictly zero-dimensional biframes.

\begin{definition}
 We say an element $a$ of a strictly zero-dimensional biframe is \emph{clarifiable} if there is a clear element with the same closure as $a$.
\end{definition}

\begin{theorem}\label{lem:str0d_biframe_congruential_iff_no_missing_clear_elements}
 A strictly zero-dimensional biframe $\M$ is congruential if and only if all of its (closed) elements are clarifiable.
\end{theorem}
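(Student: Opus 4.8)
The plan is to run the whole argument through the congruential coreflection $\chi\colon \C\M_1 \to \M$, using that $\M$ is congruential exactly when $\chi$ is an isomorphism, equivalently when every congruence on $\M_1$ is distinguished, i.e.\ lies in $\mathrm{im}(\chi_*)$. The key translation is \cref{lem:clear_element_characterisation}: an element $a$ is clarifiable if and only if the clear congruence $\partial_c$ with $\nabla_c = \cl(a)$ is distinguished (it is $\chi_*$ of a clear element), and since clarifiability of $a$ depends only on $\cl(a)$, it suffices throughout to consider closed elements.

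For the forward implication, suppose $\M$ is congruential, so that $\chi$ is an isomorphism with inverse $\chi_*$. Given a closed element $\nabla_c$, the congruence $\partial_c \in \C\M_1 \cong \M_0$ satisfies $\cl(\partial_c) = \nabla_c$ and $\chi_*(\partial_c) = \partial_c = \partial_{\cl(\partial_c)}$, so $\partial_c$ is clear in $\M$ by \cref{lem:clear_element_characterisation}. It has the required closure, so every closed element is clarifiable.

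For the converse I would argue as follows. By \cref{thm:dense_quotients_of_congruence_frame}, $\chi\colon \C\M_1 \to \M$ is a dense quotient, so writing $\M \cong \C\M_1/D$ we have $D \le \D_{\C\M_1}$; hence the nucleus $j := \chi_*\chi$ lies below the double-pseudocomplement nucleus, giving $j(C) \le C^{**}$ for every congruence $C \in \C\M_1$. The clarifiability hypothesis, via \cref{lem:clear_element_characterisation}, says precisely that every clear congruence $\partial_c$ is fixed by $j$, while \cref{cor:smooth_elements_fixed_by_chi} records that every smooth congruence is fixed. Now any $C$ with $\cl(C)=\nabla_c$ satisfies $C \le \partial_c$, so $j(C) \le j(\partial_c) = \partial_c$, and therefore $j(C) \le C^{**} \wedge \partial_c$. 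If I can establish the identity $C = C^{**}\wedge \partial_c$ (with $\nabla_c = \cl(C)$), then $j(C) \le C \le j(C)$, so $j = \mathrm{id}$, the map $\chi$ is an isomorphism, and $\M$ is congruential. Equivalently, since the distinguished congruences are the fixed points of $j$ and are closed under arbitrary meets, this identity exhibits every congruence as a meet of a smooth and a clear congruence, so the clear and smooth congruences are meet-dense and must exhaust $\C\M_1$.

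The heart of the proof, and the step I expect to be the main obstacle, is therefore the purely congruence-theoretic claim that $C = C^{**}\wedge \partial_{c}$ whenever $\nabla_c = \cl(C)$ — that a congruence is recovered from its smooth reflection together with the clear congruence at its closure. The inequality $C \le C^{**}\wedge\partial_c$ is immediate, and since $\cl$ preserves finite meets both sides have closure $\nabla_c$; all the content is in the reverse inequality. To prove it I would relativise along the closed quotient onto $\nabla\M_1/\nabla_c \cong \C({\uparrow}c)$, under which $C$ becomes a \emph{dense} congruence and $\partial_c$ becomes the largest dense congruence (the Booleanisation), reducing the claim to showing that a dense congruence equals the meet of the Booleanisation with its own double pseudocomplement. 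The genuine difficulty is that two distinct dense congruences can share a double pseudocomplement and are then separated by neither a smooth nor a clear congruence above them; the argument must therefore exploit the precise interaction between the closure operator and pseudocomplementation in the congruence frame, which I expect to require the relativisation above combined with a careful analysis of the lattice of dense congruences via the Booleanisation.
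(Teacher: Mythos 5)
Your forward direction is fine, and your reduction of the converse to the behaviour of $j=\chi_*\chi$ is a reasonable starting point, but the proof has a genuine gap exactly where you flag it: everything rests on the unproven identity $C = C^{**}\wedge\partial_c$ for $\nabla_c=\cl(C)$, and this identity is both stronger than the theorem and almost certainly false in general. Your own remark gives the reason: if two distinct dense congruences $C_1\ne C_2$ on $\M_1$ share a double pseudocomplement (which happens whenever the Booleanisation quotient of $\C\M_1$ fails to be injective on ${\downarrow}\D_{\M_1}$, as one expects for, say, $\M_1=\mathcal{O}(\mathbb{R})$), then both have closure $0$ and $\partial_0=\D_{\M_1}$, so $C_1^{**}\wedge\partial_0=C_2^{**}\wedge\partial_0$ and at most one of them can satisfy your identity. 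What the theorem actually requires is only that the \emph{sublocale} of $\C\M_1$ generated by the smooth and clear congruences be all of $\C\M_1$ --- sublocale generation uses Heyting implications as well as meets --- and your meet-decomposition is a strictly stronger claim that the hypotheses do not support.

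The paper avoids this entirely by proving much less than $j=\mathrm{id}$ pointwise: it shows $\chi$ is \emph{codense} and then invokes the background fact that a codense homomorphism out of a zero-dimensional frame is injective. Concretely, if $\chi(A)=1$, write $\cl(A)=\nabla_a$, so $A\le\partial_a$ and hence $\chi(\partial_a)=1$; clarifiability gives a clear element $b$ with closure $a$, and \cref{lem:clear_element_characterisation} gives $\chi_*(b)=\partial_a$, so $b=\chi\chi_*(b)=\chi(\partial_a)=1$, whence $a=\cl(b)=1$ and $A=1$. This only ever uses the clear congruence $\partial_a$ as an upper bound for the single congruence $A$ being tested, rather than asking it to recover $A$ exactly; that codensity step, combined with zero-dimensionality of $\C\M_1$, is the idea your argument is missing and the route you should take instead of attacking the lattice identity.
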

\begin{proof}
 If $\M$ is congruential, then every element of $\M$ is clarifiable.
 
 Conversely, suppose every element of $\M_1$ is clarifiable and take $A \in \C \M_1$ such that $\chi(A) = 1$.
 Then $\nabla_a \le A \le \partial_a$ for some $a \in \M_1$ and so $\chi(\partial_a) = 1$. Let $b \in \M_0$ be the clear element with closure $a$.
 Then $\chi_*(b) = \partial_a$ by \cref{lem:clear_element_characterisation}
 and thus $b = \chi\chi_*(b) = \chi(\partial_a) = 1$. But then $a = \cl(b) = 1$ and so $A = 1$.
 Hence $\chi$ is codense. Since $\C \M_1$ is zero-dimensional, $\chi$ is therefore injective and $\M$ is congruential.
\end{proof}

\begin{remark}
 A Skula biframe $\M$ is sober (in the spatial sense) if and only if every \emph{prime} element of $\M_1$ is clarifiable.
 So this is another sense in which the theory of congruence biframes parallels that of sober spaces.
\end{remark}

Given a frame $M$ without a specified biframe structure we can still use \cref{lem:str0d_biframe_congruential_iff_no_missing_clear_elements} to determine
if it is a congruence frame. The resulting characterisation is somewhat more involved, though perhaps it could lead to simpler characterisations in future.

\begin{corollary}
 A frame $M$ is a congruence frame if and only if it admits an idempotent, deflationary meet-semilattice homomorphism $c\colon M \to M$ such that
\begin{enumerate}[(1)]
  \item Every fixed point of $c$ is complemented
  \item The fixed points of $c$ together with their complements generate $M$
  \item Every fibre of $c$ has a maximum.
 \end{enumerate}
 Furthermore, in this case $M$ is isomorphic to $\C L$, where $L$ is the frame of fixed points of $c$, and
 the subframe inclusion $L \hookrightarrow M$ corresponds to the map $\nabla_L$.
\end{corollary}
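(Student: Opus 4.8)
The plan is to obtain both implications from \cref{lem:str0d_biframe_congruential_iff_no_missing_clear_elements} by recognising the map $c$ as the closure operation of a suitable strictly zero-dimensional biframe structure carried by $M$.

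For the forward implication I would take $M = \C L$ and set $c = \cl = \nabla_L \circ (\nabla_L)_*$, which the background records as idempotent, deflationary and finite-meet-preserving. Its fixed points are precisely the closed congruences, forming the subframe $\nabla L \cong L$; each is complemented by the corresponding open congruence, and the closed together with the open congruences generate $\C L$, yielding (1) and (2). For (3), the fibre of $\cl$ over a closed congruence $\nabla_a$ has a maximum, namely the clear congruence $\partial_a$, whose existence is guaranteed in the background.

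For the converse, suppose $c$ satisfies (1)--(3) and put $L := \mathrm{Fix}(c)$. The first step is to verify that $L$ is a subframe of $M$. Being a meet-semilattice homomorphism, $c$ is monotone; the crucial observation is that for any family of fixed points $(a_i)$ one has $a_i \le c\bigl(\bigvee_j a_j\bigr)$ for each $i$, so that $\bigvee_i a_i \le c\bigl(\bigvee_i a_i\bigr) \le \bigvee_i a_i$, forcing $\bigvee_i a_i$ to be fixed. Hence $L$ is closed under arbitrary joins; it is closed under finite meets because $c$ preserves them, and it contains $0$ and $1$. I would then declare $\M_1 = L$, let $\M_2$ be the subframe generated by the complements of elements of $L$ (which exist by (1)), and set $\M_0 = M$. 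Condition (2) ensures these generate $M$, so $\M$ is a biframe, and it is strictly zero-dimensional directly from (1).

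The heart of the argument is to identify $c$ with the biframe closure and to read (3) as clarifiability. Since $c(a)$ is a fixed point lying below $a$ and, by monotonicity, dominates every fixed point below $a$, it is the largest closed element below $a$; thus $c$ coincides with the closure $\cl$ of $\M$. The fibre of $c$ over a closed element $d$ is therefore $\{x \in \M_0 : \cl(x) = d\}$, and (3) says exactly that each such fibre has a maximum --- equivalently, that every closed element admits a largest element with the same closure, which is precisely a \emph{clear} element. Hence every closed element of $\M$ is clarifiable, and \cref{lem:str0d_biframe_congruential_iff_no_missing_clear_elements} makes $\M$ congruential, so the congruential coreflection $\chi\colon \C \M_1 \to \M$ is an isomorphism and $M = \M_0 \cong \C L$; under it the inclusion $L = \M_1 \hookrightarrow M$ corresponds to $\nabla_L$. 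I expect the subframe argument for $L$ --- in particular closure under joins --- together with the exact matching of ``fibre maximum'' with the notion of a clear element to be the only non-routine points; the remainder is bookkeeping.
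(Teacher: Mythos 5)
Your proposal is correct and follows the route the paper intends: the corollary is stated without proof as a direct consequence of \cref{lem:str0d_biframe_congruential_iff_no_missing_clear_elements}, and your argument supplies exactly the needed translation --- the fixed points of $c$ form a subframe $L$ (the join-closure argument via deflationarity and monotonicity is the right one), $(M,L,\M_2)$ is strictly zero-dimensional with $c$ as its closure map, and condition (3) is precisely clarifiability of the closed elements, with the forward direction read off from $\cl$ and $\partial_a$ on $\C L$. No gaps.
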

\begin{remark}
 For completeness we also mention the easier result that $M$ is a \emph{quotient} of a congruence frame if it satisfies the above
 with condition 3 omitted.
\end{remark}

\bibliographystyle{abbrv}

\end{document}